\newcommand{\abs}[1]{\left\vert #1 \right\vert}
\newcommand{\ceiling}[1]{\left\lceil #1 \right\rceil}
\DeclareMathOperator{\ch}{char}				
\DeclareMathOperator{\codim}{codim}
\newcommand{\dd}{\partial}					
\DeclareMathOperator{\Ext}{Ext}
\newcommand{\floor}[1]{\left\lfloor #1 \right\rfloor}
\DeclareMathOperator{\hgt}{ht}					
\DeclareMathOperator{\Hom}{Hom}
\DeclareMathOperator{\Id}{Id}					
\newcommand{\iso}{\cong}					
\newcommand{\kk}{k}						
\newcommand{\NN}{\mathbb{N}}
\DeclareMathOperator{\pd}{pd}					
\renewcommand{\phi}{\varphi}
\DeclareMathOperator{\poincare}{P}
\renewcommand{\P}{\poincare}					
\newcommand{\PP}{\mathbb{P}}
\DeclareMathOperator{\reg}{reg}				
\DeclareMathOperator{\Sym}{Sym}
\DeclareMathOperator{\Syz}{Syz}
\newcommand{\tensor}{\otimes}
\renewcommand{\tilde}[1]{\widetilde{#1}}
\DeclareMathOperator{\Tor}{Tor}
\DeclareMathOperator{\type}{type}
\newcommand{\ZZ}{\mathbb{Z}}
\newtheorem{thm}{Theorem}[section]
\newtheorem{prop}[thm]{Proposition}
\newtheorem{lemma}[thm]{Lemma}
\newtheorem{cor}[thm]{Corollary}
\theoremstyle{definition}
\newtheorem{rmk}[thm]{Remark}
\newtheorem{notation}[thm]{Notation}
\newtheorem{example}[thm]{Example}
\newtheorem{question}[thm]{Question}
\newtheorem*{ack}{Acknowledgements}
\numberwithin{equation}{section}
\numberwithin{figure}{section}
\begin{document}

\title{Quadratic Gorenstein Rings and the Koszul Property I}
\date{}

\author{Matthew Mastroeni}
\address{Department of Mathematics, Oklahoma State University, Stillwater, OK 74078}
\email{mmastro@okstate.edu}

\author{Hal Schenck}
\thanks{Schenck supported by NSF 1818646.}
\address{Department of Mathematics, Iowa State University, Ames, IA 50011}
\email{hschenck@iastate.edu}

\author{Mike Stillman}
\thanks{Stillman supported by NSF 1502294.}
\address{Department of Mathematics, Cornell University, Ithaca, NY 14850}
\email{mike@math.cornell.edu}

\subjclass[2000]{Primary 13D02; Secondary 14H45, 14H50} 
\keywords{Syzygy, Koszul algebra, Gorenstein algebra.}

\maketitle

\begin{abstract}
Let $R$ be a standard graded Gorenstein algebra over a field presented by quadrics. In \cite{Gröbner:flags:and:Gorenstein:algebras}, Conca-Rossi-Valla show that such a ring is Koszul if $\reg R \leq 2$ or if $\reg R = 3$ and $c=\codim R \leq 4$, and they ask whether this is true for $\reg R = 3$ in general. We determine sufficient conditions on a non-Koszul quadratic Cohen-Macaulay ring $R$ that guarantee the Nagata idealization $\tilde{R} = R \ltimes \omega_R(-a-1)$ is a non-Koszul quadratic Gorenstein ring. We use this to negatively answer the question of \cite{Gröbner:flags:and:Gorenstein:algebras}, constructing non-Koszul quadratic Gorenstein rings of regularity 3 for all $c \geq 9$.
\end{abstract}

\begin{spacing}{1.15}
\section{Introduction}

Let $I$ be a homogeneous ideal in a standard graded polynomial ring $S$ over a field $\kk$, and set $R = S/I$.  In this paper, we study the relationship between two conditions that impose extraordinary constraints on the homological properties of $R$, namely the Gorenstein and Koszul properties.  The ring $R$ is \emph{Gorenstein} if it is Cohen-Macaulay and its canonical module is isomorphic to a shift of $R$:
\[
\omega_R = \Ext_S^c(R,S)(-n) \iso R(a)
\]
where $\hgt I = c$ and $\dim S = n$. This implies that the \emph{graded Betti numbers} $\beta_{i,j}^S(R) = \dim_{\kk} \Tor_i^S(R,\kk)_j$
have a symmetry 
\begin{equation} \label{Gorenstein:Betti:table:symmetry}
\beta_{i,j}^S(R) = \beta^S_{c-i,a+n-j}(R)
\end{equation}
for all $i, j$.  

On the other hand, $R$ is \emph{Koszul} if the ground field $R/R_+ \iso \kk$ has a linear free resolution over $R$.  That is, we have $\beta^R_{i,j}(\kk) = \dim_{\kk} \Tor_i^R(\kk, \kk)_j = 0$ for all $i$ and $j$ with $j \neq i$.  Koszul algebras have strong duality properties such as a close relationship between the Hilbert series of $R$ and the Poincar\'e series of $\kk$ over $R$.  In particular, when $R$ is Koszul, its defining ideal $I$ must be generated by homogeneous forms of degree two, and there are significant restrictions \cite{Backelin} \cite[3.2]{free:resolutions:over:Koszul:algebras} \cite[3.4, 4.2]{Koszul:algebras:defined:by:3:quadrics} on the graded Betti numbers of $R$ over $S$ compared to general quadratic algebras.  Moreover, Koszul algebras appear as many rings of interest in commutative algebra, topology, and algebraic geometry; they include quotients by any quadratic monomial ideal, the coordinate rings of Grassmannians \cite{Kempf} and sufficiently small general sets of points in projective space \cite{points:in:projective:space}, and all suitably high Veronese subrings of any standard graded algebra \cite{high:Veronese:subrings:are:Koszul}.  We refer the interested reader to the surveys \cite{Fröberg:Koszul:algebras:survey} and \cite{Koszul:algebras:and:their:syzygies} and the references therein for further details about Koszul algebras.

A particularly important motivation for the present article is that, if $R$ is the homogeneous coordinate ring of a general curve $C$ of genus $g \geq 5$ in its canonical embedding, Vishik and Finkelberg prove that $R$ is Koszul in \cite{Vishik:Finkelberg}.  Building on this, Polishchuk shows that $R$ is Koszul if $C$ is not a plane quintic, hyperelliptic, or trigonal in \cite{Polishchuk}.  Such rings are also quadratic and Gorenstein by \cite[9.5]{geometry:of:syzygies}, so a natural question is: 

\begin{question}
Are quadratic Gorenstein rings always Koszul?
\end{question}  

Unfortunately, in \cite{Matsuda}, Matsuda shows that this is not the case by constructing a quadratic Gorenstein toric ring of regularity four and codimension seven which is not Koszul.  Nonetheless, there is some evidence that quadratic Gorenstein rings are Koszul, at least when $R$ is a complete intersection or has small Castelnuovo-Mumford regularity (see \S \ref{CM:rings:background} for a review of this concept): \\[-5 pt]
\begin{itemize}
\item Every quadratic complete intersection is Koszul.  This was first proved by Tate in \cite{Tate}; see \cite[1.19]{Koszul:algebras:and:their:syzygies} for an easier argument due to Caviglia. \\[-5 pt]

\item If $\reg R = 2$, then $R$ is Koszul by \cite[2.12]{Gröbner:flags:and:Gorenstein:algebras}. \\[-5 pt]

\item If $\reg R = 3$ and $\codim R = \hgt I \leq 5$, then $R$ is Koszul.  This follows from \cite[6.15]{Gröbner:flags:and:Gorenstein:algebras} and more recently by \cite{El:Khoury:Kustin} when $\codim R = 4$ and by \cite{Caviglia} when $\codim R = 5$.  \\[-5 pt]

\item If $\reg R=3$, and $\dim R = 2$, then $R$ is the canonical ring of a curve by \cite[9C.2]{geometry:of:syzygies} so that $R$ is Koszul by \cite{Vishik:Finkelberg} and \cite{Polishchuk}. \\[-5 pt]
\end{itemize}
Note that the symmetry \eqref{Gorenstein:Betti:table:symmetry} of the free resolution of a quadratic Gorenstein ring forces $\reg R \geq 2$ unless $R$ is a hypersurface and that $R$ is also Koszul in that case by the first bullet above.  These results led Conca, Rossi, and Valla to pose the following question.

\begin{question}[{\cite[6.10]{Gröbner:flags:and:Gorenstein:algebras}}] \label{quadratic:Gorenstein:rings:of:regularity:3:are:Koszul} 
If $R$ is a quadratic Gorenstein ring with $\reg R = 3$, is $R$ Koszul? 
\end{question}

More generally, one might ask:

\begin{question} \label{when:are:quadratic:Gorenstein:rings:Koszul}
For which positive integers $c$ and $r$ is every quadratic Gorenstein ring $R$ with $\codim R = c$ and $\reg R = r$ Koszul?
\end{question}

Matsuda's example in \cite{Matsuda} does not address the Conca-Rossi-Valla question since the toric ring he constructs has regularity four.  We give a negative answer (Example \ref{counterexample:to:Conca-Rossi-Valla}) to Question \ref{quadratic:Gorenstein:rings:of:regularity:3:are:Koszul} with codimension nine and a partial answer to Question \ref{when:are:quadratic:Gorenstein:rings:Koszul}.  In fact, our main result (Theorem \ref{non-Koszul:quadratic:superlevel:algebras:yield:non-Koszul:quadratic:Gorenstein:rings}) provides a machine for producing lots of examples of non-Koszul quadratic Gorenstein rings by deducing conditions on a quadratic Cohen-Macaulay ring such that the idealization $\tilde{R} = R \ltimes \omega_R(-a-1)$ is a non-Koszul quadratic Gorenstein ring.  

After introducing the necessary background on Cohen-Macaulay rings in \S \ref{CM:rings:background}, we prove our main result in \S \ref{main:result} and apply it in \S \ref{superlevel:examples} to give many examples of non-Koszul quadratic Gorenstein rings.  As a consequence, we prove the existence of non-Koszul quadratic Gorenstein rings $R$ with $\reg R = 3$ and $\codim R = c$ for all $c \geq 9$ in characteristic zero, which is the setting originally considered in \cite{Gröbner:flags:and:Gorenstein:algebras}.

\begin{notation}
Throughout the remainder of the paper, we use the following notation unless specifically stated otherwise.  Let $\kk$ be a fixed ground field of any characteristic, $S$ be a standard graded polynomial ring over $\kk$, and $I \subseteq S$ be a graded ideal such that $R = S/I$ is Cohen-Macaulay.  We set $\omega = \omega_R(-a-1)$, where $a = a(R)$ is the $a$-invariant of $R$, and $\tilde{R} = R \ltimes \omega$ denotes the idealization of $\omega$.  Recall that the ideal $I$ is called \emph{nondegenerate}\index{nondegenerate} if it does not contain any linear forms.  We can always reduce to a presentation for $R$ with $I$ nondegenerate by killing a basis for the linear forms contained in $I$, and we will assume that this is the case throughout.  We denote the irrelevant ideal of $R$ by $R_+ = \bigoplus_{n \geq 1} R_n$.
\end{notation}

\section{Background on Cohen-Macaulay rings} \label{CM:rings:background}

In this section, we briefly recall some invariants associated to standard graded algebras and discuss how they specifically relate to Cohen-Macaulay rings.  We refer the reader to \cite{Bruns:Herzog} and \cite{Brodmann:Sharp} for further details and any unexplained terminology.  

Let $R = S/I$ be a standard graded algebra of dimension $d$.  An important invariant of $R$ is its (Castelnuovo-Mumford) \emph{regularity}
\begin{align}
\label{regularity}
\begin{split} 
\reg R &= \max\{j \mid H^i_{R_+}(R)_{j-i} \neq 0  \; \text{for some}\; i \}
\\
&= \max\{j \mid \beta_{i,i+j}^S(R) \neq 0 \; \text{for some}\; i \} 
\end{split}
\end{align}
where $H^i_{R_+}(R)$ denotes the $i$-th local cohomology module of $R$ with respect to its irrelevant ideal.  Recall that the injective hull of $\kk$ over $R$ is the injective $R$-module 
\[E = E_R(\kk) = {}^*\Hom_\kk(R, \kk). 
\]
This is a graded $R$-module where ${}^*\Hom_\kk(R, \kk)_j \iso \Hom_\kk(R_{-j}, \kk)$ is the set of $\kk$-linear maps $R \to \kk$ of degree $j$.  The \emph{canonical module} of $R$ is the Matlis dual of its top local cohomology module
\[ \omega_R = {}^*\Hom_R(H^d_{R_+}(R), E) \iso {}^*\Hom_\kk(H^d_{R_+}(R), \kk) \]
A closely related quantity is the \emph{$a$-invariant} of $R$, which is defined by
\[ a(R) = \max\{j \mid H^d_{R_+}(R)_j \neq 0 \} = -\min\{j \mid (\omega_R)_j \neq 0 \} \]
so that $\omega_R$ is generated in degrees at least $-a(R)$.  As a consequence, we have an inequality 
\[a(R) + \dim R \leq \reg R.\]
When $R$ is Cohen-Macaulay, it is well known that $H^i_{R_+}(R) = 0$ for all $i \neq d$ so that the preceding becomes an equality.  Moreover, we say that a Cohen-Macaulay ring $R$ is \emph{level} if $\omega_R$ is generated in a single degree and \emph{Gorenstein} if $\omega_R$ is cyclic.

When $R$ is Cohen-Macaulay, the minimal number of generators of $\omega_R$ is called the \emph{type} of $R$ and denoted by $\type R$.  By Grothendieck-Serre duality, we also have 
\begin{equation}
\label{canonical:module:via:Ext}
\omega_R \iso \Ext^c_S(R, \omega_S) \iso \Ext^c_S(R, S)(-n),
\end{equation}
where $c = \hgt I$ and $\dim S = n$ so that applying $\Hom_S(-, S)$ to the minimal free resolution of $R$ over $S$ yields the minimal free resolution of $\omega_R$ up to shifts, and we can therefore read the type of $R$ as the rank of last free module in the minimal free resolution of $R$. 

We will be particularly interested in the case of Artinian rings for examples, so we elaborate on how the above definitions translate to that case.  Recall that the \emph{socle} of $R$ is the ideal $(0: R_+)$.  When $R$ is Artinian, it has finitely many nonzero graded components, and the degree $r$ of the last nonzero component is called the \emph{socle degree} of $R$ as $R_r \subseteq (0:R_+)$.  In this case, we have $\omega_R = {}^*\Hom_\kk(R, \kk)$ so that 
\[ \reg R = a(R) = \max\{j \mid R_j \neq 0 \} \] 
is the socle degree of $R$.  We will therefore use the terminology of regularity, $a$-invariant, and socle degree interchangeably in this case.  Furthermore, $R$ is level if and only if $(0: R_+) = R_r$ and Gorenstein if and only if $(0:R_+)$ is one-dimensional as a $\kk$-vector space.  

Under appropriate conditions, all of these invariants can also be read off from the so-called \emph{$h$-polynomial} $h_R(t)$ of $R$, which is the unique integer polynomial such that the Hilbert series of $R$ can be expressed as a rational function 
\[H_R(t) = h_R(t)/(1-t)^d. \] The \emph{$h$-vector} of $R$ is just the vector \[h(R) = (h_0, h_1, \dots, h_r)\] of coefficients of the $h$-polynomial, where $h_R(t) = \sum_{i=0}^r h_it^i$.  If $R$ is Cohen-Macaulay, then, after extending to an infinite ground field if necessary, we can reduce to the Artinian case by killing a maximal regular sequence of linear forms.  Since this does not affect the Betti numbers or Hilbert series of $R$, we see that the length $r$ of the $h$-vector is none other than regularity of $R$, $h_1 = \codim R$, and if $R$ is level, then $h_r = \type R$.

We close this section with an observation which is relevant to Question \ref{when:are:quadratic:Gorenstein:rings:Koszul}.  

\begin{prop}[{\cite[3.1]{minimal:homogeneous:linkage}}]
Suppose that $R = S/I$ is a quadratic Cohen-Macaulay ring. Then $\reg R \leq \pd_S R$, and equality holds if and only if $R$ is a complete intersection.
\end{prop}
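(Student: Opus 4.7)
\emph{Proof plan.} The plan is to reduce to the Artinian case and then compare $R$ with a quadratic complete intersection sitting inside $I$.

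First, after extending $\kk$ to an infinite field (which is harmless for Betti numbers, regularity, and the complete intersection property), I would mod out by a regular sequence of $d = \dim R$ general linear forms to reach an Artinian quotient $\bar{R} = \bar{S}/\bar{I}$ with the same Betti table over $\bar S$. Since $I$ is nondegenerate, $\bar{I}$ contains no linear forms, so it is still generated in degree $2$, and $\bar{R}$ is a complete intersection if and only if $R$ is. It therefore suffices to prove the claim when $R$ is Artinian: here $\reg R$ is the socle degree $s$, while $\pd_S R = n := \dim S = \codim R$, and the goal becomes $s \leq n$ with equality if and only if $R$ is a quadratic complete intersection.

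For the inequality, note that $\hgt I = n$ together with quadratic generation forces $\dim_\kk I_2 = \mu(I) \geq n$ by Krull's height theorem. Using prime avoidance in the vector space $I_2$ over the (now infinite) field $\kk$, I would build inductively a regular sequence $q_1, \dots, q_n \in I_2$: if $q_1, \dots, q_{i-1}$ are already chosen, every minimal prime of the generated ideal has height $i - 1 < n$ and hence cannot contain $I$ or the generating space $I_2$, so a generic $q_i \in I_2$ avoids all such primes. Setting $J = (q_1, \dots, q_n) \subseteq I$ gives a quadratic Artinian complete intersection $S/J$ with Hilbert series $(1+t)^n$ and socle degree exactly $n$. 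Since $R$ is a graded quotient of $S/J$, we obtain $\dim_\kk R_j \leq \binom{n}{j}$ for every $j$, forcing $R_j = 0$ for $j > n$ and hence $\reg R \leq n = \pd_S R$.

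For the equality case, suppose $\reg R = n$, so that $R_n \neq 0$. Because $\dim_\kk (S/J)_n = 1$, we must have $\dim_\kk R_n = 1$ and $(I/J)_n = 0$. Now $S/J$ is an Artinian Gorenstein ring whose one-dimensional socle lies in degree $n$, and every nonzero ideal of such a ring meets the socle nontrivially. This forces $I/J = 0$, i.e., $I = J$, so $R$ is a quadratic complete intersection; the converse is immediate, since a quadratic complete intersection of codimension $n$ has socle degree $n$. I expect the main technical obstacle to be the construction of the quadratic complete intersection $J \subseteq I$: one must simultaneously invoke the codimension bound $\mu(I) \geq n$ and a prime-avoidance argument within $I_2$ to secure the regular sequence of quadrics. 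Once $J$ is in place, the remaining steps are a clean Hilbert-function comparison and the standard socle argument in an Artinian Gorenstein ring.
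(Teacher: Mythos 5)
Your argument is correct and complete: the reduction to the Artinian case, the construction of a quadratic complete intersection $J=(q_1,\dots,q_n)\subseteq I_2$ by prime avoidance over an infinite field, the Hilbert-function comparison with $(1+t)^n$, and the socle argument forcing $I=J$ in the equality case are all sound. The paper itself gives no proof (it cites Huneke--Migliore--Nagel--Ulrich), and your approach is essentially the standard one used for this result, so there is nothing further to reconcile.
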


\section{Non-Koszul quadratic idealizations} \label{main:result}

We now come to the central construction of this paper.  Given a ring $R$ and an $R$-module $M$, the \emph{idealization} of $M$ over $R$ is the $R$-algebra $R \ltimes M$ whose underlying $R$-module is $R \oplus M$ with multiplication defined by
\[
(a, x) \cdot (b, y) = (ab, ay + bx)
\]
for all $a, b \in R$ and $x, y \in M$.  In particular, by identifying $a \in R$ and $x \in M$ with $(a, 0)$ and $(0, x)$ in $R \ltimes M$, we view $R$ as a subring of $R \ltimes M$, and the ideal generated by $M$ in $R \ltimes M$ has square zero.

\begin{rmk} \label{standard:graded:idealization:iff:level}
When $R$ is a standard graded algebra and $M$ is a graded $R$-module, the idealization has a natural $\ZZ$-grading given by \[(R \ltimes M)_j = R_j \oplus M_j\] for each $j$.  With this grading, it is clear that the idealization is standard graded if and only if $M$ is generated in degree one.  Since $\omega = \omega_R(-a-1)$ is always nonzero in degree one for $a = a(R)$, we see that the idealization $\tilde{R} = R \ltimes \omega$ of a Cohen-Macaulay ring $R$ is standard graded if and only if $R$ is level.
\end{rmk}

The usefulness of idealization for our purposes is that it gives a canonical way of producing Gorenstein rings from Cohen-Macaulay rings.  The following well known result, adapted here to the standard graded setting, was discovered independently by at least Foxby, Gulliksen, and Reiten; see \cite[7]{Reiten:Gorenstein:idealization} and the lemma preceding Theorem 3 in \cite{Gulliksen}.

\begin{prop} \label{idealization:is:Gorenstein}
If $R$ is a standard graded level $\kk$-algebra, then $\tilde{R} = R \ltimes \omega$ is a  Gorenstein standard graded ring.
\end{prop}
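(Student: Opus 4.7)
The plan is to verify in sequence that $\tilde{R}$ is standard graded, Cohen-Macaulay of dimension $d = \dim R$, and has canonical module a shift of itself. The standard graded claim is immediate from Remark~\ref{standard:graded:idealization:iff:level}: since $R$ is level, $\omega_R$ is generated in the single degree $-a$, so $\omega = \omega_R(-a-1)$ is generated in degree $1$. The inclusion $R \hookrightarrow \tilde{R}$ is a module-finite ring extension, so $\dim \tilde{R} = \dim R$, and as an $R$-module $\tilde{R} = R \oplus \omega$ is the direct sum of two maximal Cohen-Macaulay $R$-modules. Hence $\tilde{R}$ is maximal Cohen-Macaulay over $R$, and since $\sqrt{R_+ \tilde{R}} = \tilde{R}_+$ the depth of $\tilde{R}$ over $R_+$ agrees with its depth over $\tilde{R}_+$, so $\tilde{R}$ is Cohen-Macaulay.

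For the Gorenstein property, I would compute $\omega_{\tilde{R}}$ using finite duality for the equidimensional finite extension $R \hookrightarrow \tilde{R}$, which gives $\omega_{\tilde{R}} \iso \Hom_R(\tilde{R}, \omega_R)$ as graded $\tilde{R}$-modules. Splitting this along $\tilde{R} = R \oplus \omega$ and using the canonical isomorphism $\Hom_R(\omega_R, \omega_R) \iso R$, valid for any Cohen-Macaulay $R$, yields
\[
\omega_{\tilde{R}} \iso \omega_R \oplus \Hom_R(\omega_R(-a-1), \omega_R) \iso \omega_R \oplus R(a+1),
\]
which matches $\tilde{R}(a+1) = R(a+1) \oplus \omega_R$ as a graded $R$-module.

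The main obstacle is to promote this $R$-module identification to an isomorphism of $\tilde{R}$-modules, since the $\tilde{R}$-action on $\Hom_R(\tilde{R}, \omega_R)$ comes from precomposition while on $\tilde{R}(a+1)$ it comes from idealization multiplication. I would write down the explicit candidate $\Psi : \tilde{R}(a+1) \to \Hom_R(\tilde{R}, \omega_R)$ given by $\Psi(r, y)(a', x') = a' y + r x'$, where the $\omega$-summand of $\tilde{R}$ is identified with $\omega_R$ via the shift. A direct bookkeeping of the idealization multiplication $(a', x')(b, z) = (a'b, a'z + bx')$, using that $\omega^2 = 0$ in $\tilde{R}$, shows $\Psi$ is a degree-$0$, $\tilde{R}$-linear map; since it is already a bijection of underlying $R$-modules, it is an isomorphism of graded $\tilde{R}$-modules. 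Equivalently and more slickly, one checks that the composite $\tilde{R} \twoheadrightarrow \omega \iso \omega_R$ generates $\omega_{\tilde{R}}$ as a $\tilde{R}$-module, so the canonical module is cyclic and $\tilde{R}$ is Gorenstein.
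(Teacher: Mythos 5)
Your proof is correct. Note that the paper does not actually prove this proposition: it is quoted as a known result with references to Reiten \cite{Reiten:Gorenstein:idealization} and Gulliksen \cite{Gulliksen}, so you are supplying an argument where the paper supplies a citation. What you wrote is essentially the classical argument of Reiten, carefully adapted to the graded setting: the standard-gradedness via \rmkref{standard:graded:idealization:iff:level}, Cohen--Macaulayness from $\tilde{R} = R \oplus \omega$ being a maximal Cohen--Macaulay $R$-module together with $\sqrt{R_+\tilde{R}} = \tilde{R}_+$, and the computation $\omega_{\tilde{R}} \iso \Hom_R(\tilde{R}, \omega_R) \iso \omega_R \oplus R(a+1) \iso \tilde{R}(a+1)$. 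You correctly identify and resolve the one genuinely delicate point, namely that the $R$-module identification must be upgraded to a $\tilde{R}$-module isomorphism; your explicit map $\Psi(r,y)(a',x') = a'y + rx'$ does satisfy $\Psi\bigl((r,y)(b,z)\bigr) = (b,z)\cdot\Psi(r,y)$ under the precomposition action, and the closing observation that $\omega_{\tilde{R}}$ is generated by the projection $\tilde{R} \twoheadrightarrow \omega \iso \omega_R$ (the image of the identity under $\Psi$) is the cleanest way to finish, since a cyclic canonical module of the correct Hilbert function forces $\omega_{\tilde{R}} \iso \tilde{R}(a+1)$. The only items you lean on implicitly are the graded versions of finite duality $\omega_T \iso \Hom_R(T,\omega_R)$ for a module-finite extension with $\dim T = \dim R$ and of $\Hom_R(\omega_R,\omega_R)\iso R$; both are standard (see \cite{Bruns:Herzog}), so there is no gap.
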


Properties of the level algebra $R$ often carry over to its idealization.  To guarantee that $\tilde{R}$ is still quadratic, we need to impose a slightly stronger condition on $R$ than merely being level.  We say that a standard graded algebra $R$ is \emph{superlevel} if it is level and $\omega_R$ has a linear presentation over $R$.  That is, there is an exact sequence \[R(a-1)^s \stackrel{\bar{\phi}}{\to} R(a)^t \to \omega_R \to 0. \] In particular, every Gorenstein ring is superlevel since $\omega_R \iso R(a)$ in that case.  If $F_1 \stackrel{\phi}{\to} F_0 \to \omega_R \to 0$ is a minimal presentation for $\omega_R$ over $S$, then $\bar{\phi} = \phi \tensor \Id_R$ gives a presentation for $\omega_R$ over $R$, which is minimal up to summands of $F_1 \tensor_S R$ that map to zero.  Hence, $R$ is superlevel if and only if the entries of the matrix of $\phi$ of degree at least two are all contained in $I$.  However, for examples, it will suffice to find rings such that $\omega_R$ has a linear presentation over $S$. 

\begin{lemma} \label{quadratic:idealization:iff:superlevel}
Let $R = S/I$ be a quadratic level algebra.  Then $\tilde{R} = R \ltimes \omega$ is a quadratic algebra if and only if $R$ is superlevel.
\end{lemma}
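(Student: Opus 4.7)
The plan is to realize $\tilde{R}$ explicitly as a quotient of the polynomial ring $T = S[y_1, \ldots, y_t]$ with $t = \type R$, where each $y_i$ has degree one and maps to a choice of minimal generator $z_i$ of $\omega$ in degree one. Let $J$ denote the kernel of the natural surjection $T \twoheadrightarrow \tilde{R}$. Every $P \in T$ has a unique expansion $P = P_0(x) + \sum_i P_i(x) y_i + (\text{terms of } y\text{-degree} \geq 2)$ with $P_0, P_i \in S$, and since $y_iy_j$ maps to zero in $\tilde{R}$, such a $P$ lies in $J$ exactly when $P_0 \in I$ and $(P_1, \ldots, P_t)$ is an $S$-syzygy on $z_1, \ldots, z_t$. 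Fixing a minimal $S$-presentation $\bigoplus_k S(-d_k) \xrightarrow{\phi} S(-1)^t \to \omega \to 0$ with columns $c^{(k)} = (f_1^{(k)}, \ldots, f_t^{(k)})$ (entries of common degree $d_k - 1$), we obtain
\[ J = (I) + (y_iy_j : 1 \leq i \leq j \leq t) + (g_k : k \geq 1), \qquad g_k := \sum_i f_i^{(k)} y_i, \]
with $\deg_T g_k = d_k$. Since $R$ is quadratic, both $I$ and the $y_iy_j$ consist of quadrics, so the question reduces to whether each $g_k$ of degree $\geq 3$ can be absorbed into the quadratic part of $J$.

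For the backward direction, if $R$ is superlevel then every column $c^{(k)}$ with $d_k \geq 3$ has all entries in $I$. Writing each $f_i^{(k)}$ as an $S$-combination of the quadratic generators of $I$ places $g_k$ in $(I) \cdot T$, so $J$ is generated by the quadrics in $I$, the $y_iy_j$, and the $g_k$ coming from linear columns.

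The main obstacle is the forward direction, which I would argue contrapositively. Suppose $R$ is not superlevel, so some column $c^{(k)}$ has $d_k \geq 3$ and $\bar c^{(k)} \neq 0$ in $R^t$; I claim $g_k$ is then a minimal generator of $J$. Assuming instead $g_k \in T_+ \cdot J$, expand
\[ g_k = \sum_j p_j u_j + \sum_{i \leq j} y_iy_j\, v_{ij} + \sum_{l \neq k} g_l w_l \]
with all multipliers in $T_+$. Reducing modulo $(y_iy_j)$ and comparing the coefficient of each $y_i$ on both sides, then reducing modulo $I$, yields an identity
\[ \bar c^{(k)} = \sum_{l\,:\, d_l < d_k} \bar c^{(l)}\, \bar w_l^{(S)} \quad \text{in } R^t, \]
where $w_l^{(S)} \in S$ denotes the $y$-free part of $w_l$ and each $\bar w_l^{(S)} \in R_+$; the range $d_l < d_k$ is forced by matching total degrees with $w_l \in T_+$. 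But the paragraph preceding the lemma notes that the nonzero columns of $\bar{\phi}$ form a minimal $R$-presentation of $\omega$, so the nonzero $\bar c^{(k)}$ cannot be an $R_+$-combination of the other $\bar c^{(l)}$. This contradiction shows $g_k$ is a minimal generator of $J$ of degree $d_k \geq 3$, whence $\tilde R$ is not quadratic.
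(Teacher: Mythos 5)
Your proposal is correct and follows essentially the same route as the paper: it builds the identical presentation $\tilde{R} \cong S[y_1,\dots,y_t]/((y_1,\dots,y_t)^2 + \mathcal{L} + (I))$ and reduces quadraticity of $\tilde{R}$ to whether the first syzygies of $\omega$ over $R$ are linear. Your explicit coefficient-of-$y_i$ computation in the forward direction simply spells out the step the paper compresses into ``it follows from the above presentation,'' leaning (as the paper does) on the observation that $\bar{\phi}$ is a minimal $R$-presentation of $\omega$ up to zero columns.
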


\begin{proof}
There is an obvious $R$-algebra isomorphism $\Sym_R(\omega)/(\omega)^2 \iso \tilde{R}$. We also have $\Sym_R(\omega) \iso \Sym_S(\omega)/I\Sym_S(\omega)$, and if $\omega$ is minimally generated by $t$ elements, then $\Sym_S(\omega) \iso S[y_1, \dots, y_t]/\mathcal{L}$, where \[\mathcal{L} = (\sum_{i=1}^t f_iy_t \mid (f_1, \dots, f_t) \in \Syz_1^S(\omega)). \] Assembling all of these facts together, we see that
\begin{equation} \label{idealization:presentation}
\tilde{R} \iso S[y_1,\dots, y_t]/((y_1,\dots, y_t)^2 + \mathcal{L} + (I))
\end{equation}
Moreover, since $\tilde{R}_j = R_j \oplus \omega_j$, this isomorphism is graded if we grade $S[y_1, \dots, y_t]$ by total degree in the variables of $S$ and the $y_i$. That is, $(S[y_1, \dots, y_t])_j = \bigoplus_{i \leq j} \bigoplus_{\abs{\alpha} = i} S_{j-i}y^\alpha$, where $y^\alpha = y_1^{\alpha_1}\cdots y_t^{\alpha_t}$ and $\abs{\alpha} = \sum_i \alpha_i$ for all $\alpha \in \NN^t$.  Since $I$ is generated by quadrics, it follows from the above presentation that $\tilde{R}$ is quadratic if and only if the minimal first syzygies of $\omega$ are generated by the linear syzygies and $IS^t$, which happens if and only if the minimal first syzygies of $\omega$ over $R$ are all linear as $\Syz_1^R(\omega) \iso \Syz_1^S(\omega)/IS^t$.
\end{proof}

In order to show that non-Koszulness can be passed from $R$ to its idealization $\tilde{R} = R \ltimes \omega$, we make use of a technical result of Gulliksen computing the graded Poincar\'e series of $\tilde{R}$ in terms of those of $R$ and $\omega$.  The \emph{graded Poincar\'e series} of a finitely generated graded $R$-module $M$ is the formal power series
\[
\P_R^M(s, t) = \sum_{i, j} \beta_{i, j}^R(M)s^jt^i  \in \ZZ[s, s^{-1}][[t]]
\]
When $M = \kk$, we omit the superscript from the notation and refer to $\P_R(s, t)$ as the graded Poincar\'e series of $R$.  Note that $R$ is Koszul if and only if $\P_R \in \ZZ[[st]]$.

\begin{thm}[{\cite[Thm 2]{Gulliksen}}] \label{Poincaré:series:of:the:idealization}
If $R$ is a standard graded $\kk$-algebra and $M$ is a finitely generated graded $R$-module generated in degree one, then the graded Poincar\'e series of $\tilde{R} = R \ltimes M$ is
\[
\P_{\tilde{R}}(s, t) = \frac{\P_R(s, t)}{1 - t\P_R^M(s, t)} 
\]
\end{thm}

Combining this result with the above observations, we have the following.

\begin{thm} \label{non-Koszul:quadratic:superlevel:algebras:yield:non-Koszul:quadratic:Gorenstein:rings}
If $R$ is is a non-Koszul, quadratic superlevel algebra, then $\tilde{R} = R \ltimes \omega$ is a non-Koszul quadratic Gorenstein ring.  Moreover, we have
\begin{align*} 
\codim \tilde{R} &= \codim R + \type R \\
\reg \tilde{R} &= \reg R + 1
\end{align*}
\end{thm}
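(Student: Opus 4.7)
The plan is to dispose of the four assertions of the theorem one by one, relying on the three preparatory results of the section: Proposition \ref{idealization:is:Gorenstein}, Lemma \ref{quadratic:idealization:iff:superlevel}, and Theorem \ref{Poincar�:series:of:the:idealization}. First, since a superlevel algebra is in particular level, Proposition \ref{idealization:is:Gorenstein} already guarantees that $\tilde{R}$ is a standard graded Gorenstein ring. Combining this with Lemma \ref{quadratic:idealization:iff:superlevel} applied to our quadratic superlevel $R$ shows that $\tilde{R}$ is moreover quadratic, so the first half of the claim is immediate.

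For non-Koszulness, I plan to exploit the fact that all coefficients appearing in Gulliksen's formula are non-negative integers. Rewriting \thmref{Poincar�:series:of:the:idealization} as
\[
\P_{\tilde{R}}(s,t) \;=\; \P_R(s,t) \;+\; t\,\P_R^{\omega}(s,t)\,\P_{\tilde{R}}(s,t),
\]
and comparing coefficients of $s^j t^i$, I obtain the termwise inequality $\beta_{i,j}^{\tilde{R}}(\kk) \geq \beta_{i,j}^R(\kk)$ for all $i,j$. If $\tilde{R}$ were Koszul, the left-hand side would vanish for $j \neq i$, forcing $\beta_{i,j}^R(\kk) = 0$ for $j\neq i$ and contradicting the hypothesis that $R$ is non-Koszul. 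This is the slickest step, and it is also the one I expect to require the most care in presentation, since it is easy to drop a sign when rearranging a generating function.

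For the codimension identity, I will use the explicit presentation \eqref{idealization:presentation}: writing $t = \type R$, so that $\omega$ has $t$ minimal generators in degree one, $\tilde{R}$ is a quotient of $S[y_1,\dots,y_t]$ by an ideal whose generators live in degrees $\geq 2$ (the quadratic relations defining $I$, the linear syzygy part of $\mathcal{L}$, and the $y_iy_j$'s). Since $I$ is nondegenerate by our standing assumption, this presentation of $\tilde{R}$ is nondegenerate as well, so $\tilde{R}$ has embedding dimension $\dim S + t$. Because the ideal $\omega \subseteq \tilde{R}$ squares to zero, $\dim \tilde{R} = \dim R$, and subtracting yields $\codim \tilde{R} = \codim R + \type R$.

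Finally, for the regularity, I will first reduce to the Artinian situation. Picking a maximal regular sequence of linear forms $x_1,\dots,x_d$ on $R$ (after harmlessly extending the ground field if necessary), I observe that each $x_i \in R \subset \tilde{R}$ is also a non-zero-divisor on $\tilde{R}$: if $(a,m)x_i=0$ then $ax_i = 0$ and $mx_i=0$, and since $\omega_R$ is a maximal Cohen-Macaulay $R$-module, $x_i$ is regular on $\omega$ as well. Thus $x_1,\dots,x_d$ remains regular on $\tilde{R}$, neither changes the regularity of $R$ nor of $\tilde{R}$, and produces the idealization of the Artinian reduction. In the Artinian case, $\omega_R = {}^{*}\Hom_\kk(R,\kk)$ is concentrated in degrees $-\reg R,\dots,0$, so $\omega = \omega_R(-a-1)$ lives in degrees $1,\dots,\reg R + 1$, and consequently $\tilde{R} = R \oplus \omega$ has socle degree $\reg R + 1$, giving $\reg \tilde{R} = \reg R + 1$ as required.
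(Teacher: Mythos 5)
Your proposal is correct and follows essentially the same route as the paper: the Gorenstein and quadratic claims come from Proposition \ref{idealization:is:Gorenstein} and Lemma \ref{quadratic:idealization:iff:superlevel}, and your non-Koszulness argument is exactly the paper's coefficientwise comparison in Gulliksen's formula, which yields $\beta_{i,j}^{\tilde{R}}(\kk) \geq \beta_{i,j}^{R}(\kk)$ from the non-negativity of all terms. The codimension and regularity computations you give (embedding dimension from the presentation \eqref{idealization:presentation} plus $\dim \tilde{R} = \dim R$, and Artinian reduction to read off the socle degree) are correct and simply flesh out details the paper dispatches in one sentence via the Hilbert series of $\tilde{R} = R \oplus \omega$ and Cohen-Macaulayness.
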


\begin{proof}
By Proposition \ref{idealization:is:Gorenstein} and Lemma \ref{quadratic:idealization:iff:superlevel}, it suffices to prove that $\tilde{R}$ is not Koszul.  Write
\[ \P_{\tilde{R}}(s, t) = \sum_i f_i(s)t^i \qquad \P_R^\omega(s, t) = \sum_i g_i(s)t^i \qquad \P_R(s, t) = \sum_i h_i(s)t^i \]
for some $f_i, g_i, h_i \in \ZZ[s]$ with non-negative coefficients.  By the above theorem, we know that \[\P_{\tilde{R}}(s, t)(1-t\P_R^\omega(s, t)) = \P_R(s, t)\] so that 
\[
h_i = f_i - \sum_{j = 1}^i f_{i-j}g_{j-1}
\]
for all $i \geq 1$.  Since all of the polynomials in the above expression have non-negative coefficients, any monomial in the support of $h_i$ must also belong to the support of $f_i$.  Since $R$ is not Koszul, there is an $i \geq 1$ such that $h_i$ has a monomial $s^j$ with $j \neq i$ in its support; hence, so does $f_i$, and $\tilde{R}$ is not Koszul.  The statements about the codimension and regularity of $\tilde{R}$ follow from considering the Hilbert series of $\tilde{R}$ and the fact that $\tilde{R}$ is Cohen-Macaulay.
\end{proof}

\begin{rmk}
The interested reader may wish to consult \cite[2.3]{absolutely:Koszul:algebras} where the above argument was also discovered in the context of retracts of rings, of which idealization is a special case.  Part (1) of that result shows that Gulliksen's proof of Theorem \ref{Poincaré:series:of:the:idealization} carries over with minimal changes to general retracts.  We thank Srikanth Iyengar for bringing this paper to our attention.
\end{rmk}

\section{Examples of non-Koszul superlevel algebras} \label{superlevel:examples}

\subsection{Almost complete intersections}

\begin{prop} \label{superlevel:ACI's:in:codimension:4}
If $R = S/I$ is a quadratic Cohen-Macaulay almost complete intersection with $\reg R = 2$ and $\hgt I = 4$, then $R$ is a non-Koszul superlevel algebra. 
\end{prop}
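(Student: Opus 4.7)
The plan is to reduce to the Artinian case and extract everything from the $h$-vector together with the fact that $I$ is quadratic. After passing to an infinite scalar extension if necessary and quotienting by a maximal regular sequence of linear forms---an operation that preserves the graded Betti numbers over the polynomial ring, the Hilbert series, Koszulness, and the superlevel property---I may assume $S = \kk[x_1,\dots,x_4]$ and $R = S/I$ is Artinian with $I$ an $\mathfrak{m}$-primary quadratic ideal minimally generated by $5$ forms and socle degree $2$. Since $\dim S_2 = 10$ and the minimal quadratic generators are linearly independent, $\dim R_2 = 5$, and $R$ has $h$-vector $(1,4,5)$.

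The key numerical input is $\beta^S_{2,3}(R) = 0$. Since $R_3 = 0$ we have $I_3 = S_3$, and since $I$ is generated by quadrics $I_3 = S_1 \cdot I_2$. Choosing minimal quadratic generators $f_1,\dots,f_5$ of $I$, the multiplication map
\[
\phi : S_1^{\oplus 5} \longrightarrow S_3, \qquad (L_1,\dots,L_5)\longmapsto \sum_i L_i f_i,
\]
is therefore surjective, but $\dim S_1^{\oplus 5} = 20 = \dim S_3$, so $\phi$ is an isomorphism and there are no nontrivial linear syzygies among the $f_i$. Because the minimum generator degree of a minimal free resolution strictly increases with homological step, $\beta_{2,3} = 0$ forces the generators of $F_i$ in the minimal free resolution of $R$ over $S$ to lie in degree $\geq i+2$ for $i=2,3,4$. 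Combined with the upper bound $\beta_{i,j} = 0$ for $j > i+2$ coming from $\reg R = 2$, each of $F_2, F_3, F_4$ is concentrated in a single internal degree, so $F_i = S(-i-2)^{\beta_{i,i+2}}$. The concentration of $F_4$ in a single degree gives that $R$ is level, and the vanishing $\beta^S_{3,4}(R) = 0$ gives that $\omega_R$ has a linear presentation over $S$, hence over $R$ by the remark preceding Lemma~\ref{quadratic:idealization:iff:superlevel}. Therefore $R$ is superlevel.

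For non-Koszulness I would invoke the Fr\"oberg identity: if $R$ were Koszul then $\P_R(1,t)\cdot H_R(-t) = 1$, so the power series $\P_R(1,t)$ would equal $1/(1-4t+5t^2)$. Writing this as $\sum_n a_n t^n$, the $a_n$ satisfy $a_n = 4a_{n-1} - 5a_{n-2}$ with $a_0=1$, $a_1=4$, giving $a_2=11,\, a_3=24,\, a_4=41,\, a_5=44,\, a_6=-29$. Since $\P_R(1,t)$ has non-negative coefficients (they are the Betti numbers $\beta_i^R(\kk)$), the negative value $a_6 = -29$ is incompatible with Koszulness, so $R$ is not Koszul.

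The hardest part is spotting the dimension-count coincidence $\dim S_1^{\oplus 5} = \dim S_3 = 20$, which is special to the codimension-$4$ ACI setting and drives both the structural conclusion about the resolution and the superlevel property; the remaining arguments are short numerical checks. Note that the ACI hypothesis enters only through $\mu(I) = \hgt I + 1 = 5$, so this proposition is really a statement about all quadratic Cohen-Macaulay rings of codimension $4$ and regularity $2$ whose defining ideal is minimally generated by five quadrics.
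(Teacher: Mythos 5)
Your proof is correct, and it takes a genuinely different route from the paper's. The paper chooses a quadratic complete intersection $L \subseteq I$ of height $4$, forms the link $J = (L:I)$, and exploits the fact that $J$ is Gorenstein to show directly that $\omega_R \cong J/L(4)$ is generated in a single degree and has a linear presentation; non-Koszulness is then obtained by citing \cite[3.3]{Koszul:ACI's}, which says a Cohen-Macaulay Koszul almost complete intersection of codimension $4$ has regularity $3$. You instead reduce to the Artinian case with $h$-vector $(1,4,5)$ and use the dimension coincidence $\dim S_1^{\oplus 5} = 20 = \dim S_3$ to show $\beta^S_{2,3}(R) = 0$ directly; this, together with $\reg R = 2$ and minimality of the resolution, forces $F_2, F_3, F_4$ to be concentrated in single degrees $4,5,6$, giving levelness (from $F_4$) and the linear presentation of $\omega_R$ (from $F_3$ and $F_4$) in one stroke. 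For non-Koszulness, you avoid the linkage/ACI machinery entirely and instead compute the first negative coefficient of $1/(1-4t+5t^2)$, which is incompatible with the Fr\"oberg relation $\P_R(1,t)\,H_R(-t)=1$ that Koszulness would impose. Both arguments are correct; the linkage approach generalizes smoothly to the ``almost complete intersections of codimension $2n$ and regularity $n$'' family discussed in the paper's next example, while your argument is shorter, self-contained, and makes the non-Koszulness explicit from the $h$-vector alone (which incidentally recovers that the $h$-vector $(1,4,5)$ is ``non-Koszul'' independent of characteristic). One cosmetic point: your closing remark that the ACI hypothesis ``enters only through $\mu(I)=\hgt I+1$'' is correct but tautological, since that equality is the definition of an almost complete intersection.
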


\begin{proof}
Since the conclusion is preserved under flat base change and killing a regular sequence of linear forms on $R$, we may assume without loss of generality that the ground field $\kk$ is infinite and that $R$ is Artinian, and we can choose a quadratic complete intersection $L \subseteq I$ with $\hgt L = \hgt I = 4$.  Set $J = (L : I)$, the ideal directly linked to $I$ by $L$.  Since $R$ is an almost complete intersection, $J$ is a Gorenstein ideal, and
\begin{align*} 
S/J(a) \iso \omega_{S/J} &\iso \Hom_S(S/J, \omega_{S/L}) = \Hom_S(S/J, S/L(4)) \\
&\iso (0:_{S/L} J/L)(4) = I/L(4)
\end{align*}
for $a = \reg S/J$.  As $I/L \iso S/J(a-4)$ is generated in degree two, it follows $a = 2$.  On the other hand, $\omega_R = J/L(4)$ is generated in degrees at least $-2$ so that $J$ is generated in degrees at least 2. Combining this with the fact that $S/J$ is Gorenstein of regularity 2, it follows that $J$ must be generated by quadrics, and in particular, $\omega_R$ is generated in degree exactly $-2$ so that $R$ is level.

The exact sequence $0 \to J/L \to S/L \to S/J \to 0$ yields an induced exact sequence
\[ \Tor_2^S(S/J, \kk)_j \to \Tor^S_1(J/L, \kk)_j \to \Tor_1^S(S/L, \kk)_j = 0 \]
for $j > 2$.  Since $S/J$ has a Gorenstein linear resolution, we see that $\Tor_2^S(S/J, \kk)_j = 0 = \Tor_1^S(J/L, \kk)_j$ for $j > 3$. Hence, $\Tor_1^S(\omega_R, \kk)_j = 0 \mbox{ for }j > -1$, and $\omega_R$ has a linear presentation since it is generated in degree $-2$.

Finally, $R$ is necessarily non-Koszul since any Cohen-Macaulay Koszul almost complete intersection of codimension 4 must have regularity 3 by \cite[3.3]{Koszul:ACI's}.
\end{proof}

Recall that the graded Betti numbers $\beta_{i,j}^S(R)$ may be compactly summarized in the \emph{Betti table} of $R$, where the entry in column $i$ and row $j$ is $\beta_{i,i+j}^S(R)$; the indexing is designed so that the regularity of $R$ is the index of the bottom-most nonzero row in the Betti table of $R$ (compare \eqref{regularity}). This is illustrated in our next example.

\begin{example} \label{counterexample:to:Conca-Rossi-Valla}
As a concrete example of the above proposition, consider the ring $R = S/I$ defined by the ideal 
\[ I = (x^2, y^2, z^2, w^2, xy+zw) \subseteq \kk[x,y,z,w] = S. \] 
To see that $R$ has socle degree 2, it suffices to note that $R_+^3 = 0$.  Since $I$ contains the squares of the variables, it is enough to observe that $I$ contains all four square-free cubic monomials by multiplying $xy+zw$ by each variable.  The Betti table of $R$ is given by 
\vspace{1 ex}
\begin{equation*}
\begin{tabular}{r|ccccc}
 & 0 & 1 & 2 & 3 & 4  \\
\hline  
0 & 1 & -- & -- & -- & -- 
\\ 
1 & -- & 5 & -- & -- & --
\\ 
2 & -- & -- & 15 & 16 & 5
\end{tabular}
\vspace{1 ex}
\end{equation*} 
 Since $\type R = h_2(R) = \binom{5}{2} -5 = 5$, it follows that $\tilde{R} = R \ltimes \omega$ is a non-Koszul quadratic Gorenstein ring with $\codim \tilde{R} = 4 + 5 = 9$ and $\reg \tilde{R} = 2 + 1 = 3$.

Since $R$ is Cohen-Macaulay, it follows from \eqref{canonical:module:via:Ext} that the $S$-dual of the minimal free resolution of $R$ is the minimal free resolution of $\omega$ up to shifting.  Hence, when $\ch(\kk) = 0$, it follows from the proof of Lemma \ref{quadratic:idealization:iff:superlevel} and an explicit computation of the last differential in the minimal free resolution of $R$ in Macaulay2 \cite{Macaulay2} that the generators for the defining ideal of $\tilde{R}$ presented as a quotient of the polynomial ring $S[t_1, \dots, t_5]$ are
\begin{align*}
(x^2, y^2, z^2, w^2, xy &+ zw) + (t_1, \dots, t_5)^2 \\
&+ (xt_1, yt_1, wt_1-xt_2, zt_1 + yt_2, yt_2 + xt_3, yt_3, zt_2 - wt_3, zt_3, \\
& \qquad xt_4, wt_1 - yt_4, wt_2 + zt_4, zt_4 - xt_5, wt_3 - yt_5, zt_5, wt_4, wt_5)
\end{align*}
Furthermore, the Betti table for $\tilde{R}$ is 
\vspace{1 ex}
\begin{equation*}
\begin{tabular}{r|cccccccccc}
 & 0 & 1 & 2 & 3 & 4 & 5 & 6 & 7  &8&9\\
\hline  
0 & 1 & -- & -- & -- & -- & -- & -- & -- &--&--
\\ 
1 & -- & 36 & 160 &330 & 384 & 260 & 96 & 15 &--&--
\\ 
2 & -- & -- & 15 & 96 & 260 & 384 & 330 &160&36&--
\\ 
3 & -- & -- & -- & -- & -- & -- & -- &-- &--& 1 
\end{tabular}
\vspace{1 ex}
\end{equation*} 
\end{example}

\begin{example}
One can show that any quadratic Cohen-Macaulay almost complete intersection $R = S/I$ with $\reg R = n \geq 2$ and $\codim R = 2n$ is non-Koszul and superlevel.  The essential point is that, after possibly extending to an infinite ground field, we can choose a quadratic complete intersection $L \subseteq I$, and any other quadric $q$ such that $I = (L, q)$ will necessarily be a Lefschetz element of degree 2 on $S/L$ by Hilbert function considerations, and conversely, every almost complete intersection $R$ formed by killing a Lefschetz element of degree 2 on a quadratic complete intersection of codimension $2n$ in this way has $\reg R = n$ and $\codim R = 2n$.  The proof that $R$ is superlevel and not Koszul is then essentially Lemma 4.2 of the recent paper \cite{subadditivity:fails:for:Gorenstein:rings}.  

Generalizing the preceding example, one ring of this type is 
\[ R = \kk[x_1,\dots, x_n, y_1, \dots, y_n]/(x^2_i, y_i^2 \mid 1 \leq i \leq n) + (\sum_{i=1}^n x_iy_i), \]
for all $n \geq 2$ by adapting \cite[A.2]{Koszul:properties:of:the:moment:map} to the commutative case.  In \cite{subadditivity:fails:for:Gorenstein:rings}, McCullough and Seceleanu consider another family of rings of this type to show that subadditivity fails for quadratic Gorenstein rings.  Migliore and Mir\`{o}-Roig have also shown in \cite{resolutions:of:generic:ACI's} that generic quadratic almost complete intersections of even codimension are of this form.
\end{example}

\subsection{Ideals of generic forms}
\label{ideals:of:generic:forms}

In this section, we assume that $\kk$ is a field of characteristic zero.  By a \emph{generic} set of $g$ quadrics, we mean a point in a Zariski-open subset of $S_2^g$.  Five generic quadrics in four variables satisfy the conditions of Proposition \ref{superlevel:ACI's:in:codimension:4}, and generic quadrics in more variables provide a larger class of examples of superlevel algebras.

\begin{thm}[{\cite[7.1]{Fröberg:Löfwal:generic:forms}}]
If $R=S/I$ is an Artinian algebra with $I$ generated by $g$ generic quadrics in $n$ variables, then $R$ is Koszul if and only if $g=n$ or $g \geq \frac{n^2+2n}{4}$.
\end{thm}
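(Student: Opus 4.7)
The plan is to combine Fr\"oberg's Hilbert-series formula for generic quadratic algebras with the Koszul numerical identity $\P_R(t) \cdot H_R(-t) = 1$. I would set $d = \binom{n+1}{2} - g$ and invoke Fr\"oberg's theorem for generic forms of degree two, which computes $H_R(t)$ as the truncation at the first non-positive coefficient of
\[
\frac{(1-t^2)^g}{(1-t)^n} = 1 + nt + dt^2 + \left(\tbinom{n+2}{3} - gn\right)t^3 + \cdots.
\]
The case $g = n$ is a quadratic complete intersection and hence Koszul by Tate's theorem, so the remaining work is for $g > n$, where $R$ is Artinian but not a complete intersection.

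For the ``only if'' direction I would exploit that $R$ Koszul forces $\P_R(t) = 1/H_R(-t)$, so the Taylor coefficients of $1/H_R(-t)$ must all be non-negative. A quick numerical check gives $(n^2+2n)/4 \ge (n+1)(n+2)/6$ for $n \ge 2$, so Fr\"oberg's truncation forces $H_R(t) = 1 + nt + dt^2$ throughout the range $g \ge (n^2+2n)/4$, and $H_R(-t) = dt^2 - nt + 1$ has discriminant $n^2 - 4d = 4g - n(n+2)$. An elementary analysis of $1/(dt^2 - nt + 1)$ shows its coefficients are non-negative precisely when the discriminant is non-negative, i.e.\ precisely when $g \ge (n^2+2n)/4$. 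This rules out Koszulness in the socle-degree-$2$ range $(n+1)(n+2)/6 \le g < (n^2+2n)/4$. For the narrower window $n < g < (n+1)(n+2)/6$ where the socle degree is at least $3$, I would perform an analogous coefficient-by-coefficient computation on $1/H_R(-t)$ from Fr\"oberg's explicit $H_R(t)$ to exhibit a negative entry in some fixed homological degree.

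For the ``if'' direction when $g \ge (n^2+2n)/4$, the inequality $n^2 \ge 4d$ ensures a real factorization $H_R(t) = (1 + \alpha t)(1 + \beta t)$ with $\alpha + \beta = n$ and $\alpha \beta = d$. When $\alpha, \beta$ happen to be non-negative integers, the tensor product $\kk[x_1, \dots, x_\alpha]/\mathfrak{m}^2 \otimes_\kk \kk[y_1, \dots, y_\beta]/\mathfrak{m}^2$ is a tensor product of quadratic monomial (hence Koszul) algebras realizing the prescribed Hilbert series. For the remaining $(n, d)$ I would produce a quadratic monomial ideal $I_0 \subseteq S$ whose quotient has Hilbert series $1 + nt + dt^2$; since quadratic monomial ideals admit quadratic Gr\"obner bases, $S/I_0$ is Koszul, and upper semicontinuity of $\beta^R_{i,j}(\kk)$ along the flat family connecting $I_0$ to a generic $I$ transports the Koszul property back to $R$.

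The main obstacle will be this last combinatorial-algebraic step: selecting $g$ quadratic monomials in $n$ variables whose vanishing annihilates every cubic monomial while leaving a $d$-dimensional space of residual quadrics for each admissible $(n,d)$, and verifying that the resulting one-parameter degeneration is genuinely flat with constant Hilbert function so that semicontinuity applies in the required form. Everything else --- Fr\"oberg's formula for generic quadrics and the bookkeeping with the Koszul identity --- reduces to explicit power-series manipulation.
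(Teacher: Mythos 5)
This theorem is not proved in the paper at all; it is quoted verbatim from Fr\"oberg--L\"ofwal \cite{Fr�berg:L�fwal:generic:forms} as an external result. There is therefore no ``paper's own proof'' to compare against, and your proposal has to be assessed on its own terms.

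Your ``only if'' direction, at least in the socle-degree-two window $\frac{(n+1)(n+2)}{6} \le g < \frac{n^2+2n}{4}$, is sound: Hochster--Laksov gives $\dim R_3 = 0$ for generic quadrics there, so $H_R(t) = 1 + nt + dt^2$ is known, and the standard Koszul identity $\P^\kk_R(t)\, H_R(-t) = 1$ combined with your discriminant calculation does rule out Koszulness when $n^2 < 4d$. But in the narrower window $n < g < \frac{(n+1)(n+2)}{6}$, where the socle degree is at least three, you ``invoke Fr\"oberg's theorem'' for the full Hilbert series of $R$; this is the Fr\"oberg conjecture, which for quadrics is not established beyond sporadic cases ($n \le 3$, $g = n+1$, etc.), so you cannot assume it. You would either need the conjectured Hilbert function unconditionally, or a different witness of non-Koszulness that only uses the known inequality $H_R(t) \le$ the truncated series coefficientwise; as stated, this is a gap you have acknowledged but not closed.

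The more serious problem is the ``if'' direction. There is no upper-semicontinuity theorem for $\beta^R_{i,j}(\kk)$ along a flat family of quotient rings, and Koszulness is not a property that transports from a special monomial fibre $S/I_0$ to a generic fibre $S/I$ merely because they share a Hilbert function and sit in a connected flat family; the Koszul property is not even known to be constructible in families. What is true is the one-sided Gr\"obner statement: if $\mathrm{in}_{\prec}(I)$ is a quadratic monomial ideal for some term order, then $S/I$ is Koszul (``G-quadratic implies Koszul''). That deduction runs from the degeneration $\mathrm{in}(I)$ to the original $I$, not in the direction you propose. Exhibiting \emph{some} quadratic monomial $I_0$ with the right Hilbert function is not enough; you must exhibit a term order and coordinates for which $\mathrm{in}(I)$ itself is quadratic monomial when $I$ is generic with $g \ge \frac{n^2+2n}{4}$. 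This requires an explicit combinatorial identification of the generic initial ideal, of the kind used in Theorem~\ref{generic:quadrics:with:socle:degree:2} of this paper for a related purpose, and cannot be replaced by the flat-family semicontinuity appeal. The Fr\"oberg--L\"ofwal proof itself proceeds by entirely different (Koszul-homology/Lie-algebraic) methods.
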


For an ideal $I$ generated by $g$ generic forms of degree $d$ in $n$
variables, Hochster-Laksov \cite{Hochster:Laksov:generic:forms} prove that $I$ has maximal growth
in degree $d+1$, that is
\[
\dim_\kk I_{d+1} = \min \left\{ gn, {n+d \choose d+1} \right\}
\] 
Consequently, we see that a ring $R = S/I$ defined by $g$ generic quadrics in $n$ variables is non-Koszul and has socle degree 2 if and only if 
\begin{equation} \label{superlevel:generic:forms:condition}
\frac{n^2+3n+2}{6} \leq g < \frac{n^2+2n}{4}
\end{equation} 
The $h$-vector of such an algebra is simply $h(R) = (1,n, \binom{n+1}{2} - g)$.

\begin{figure}[h]
\begin{tabular}{|c|l|}
\hline 
$n$ & $g$ \\ 
\hline 
$4$ & $5$ \\
\hline 
$5$ & $7,8$ \\
\hline 
$6$ & $10,11$ \\
\hline 
$7$ & $12,13,14,15$ \\
\hline 
$8$ & $15,16,17,18,19$ \\
\hline
\end{tabular}
 \caption{Numbers of generic quadrics yielding non-Koszul algebras of socle degree 2 for small $n$}
\end{figure}

\begin{thm} \label{generic:quadrics:with:socle:degree:2}
Let $I \subseteq S = \kk[x_1, \dots, x_n]$ be an ideal generated by $g$ generic quadrics, where $n \geq 4$ and $g$ satisfies the inequalities in \eqref{superlevel:generic:forms:condition}.  Then $R = S/I$ is non-Koszul and superlevel.  Hence, $\tilde{R} = R \ltimes \omega$ is a non-Koszul quadratic Gorenstein ring with $h$-vector
\begin{equation}\label{idealization:of:generic:forms:h-vector}
h(\tilde{R}) = (1, \tfrac{n^2+3n}{2}-g, \tfrac{n^2+3n}{2}-g, 1) 
\end{equation}
\end{thm}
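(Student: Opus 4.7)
My plan is to verify the hypotheses of \thmref{non-Koszul:quadratic:superlevel:algebras:yield:non-Koszul:quadratic:Gorenstein:rings} for $R = S/I$---namely that $R$ is quadratic, non-Koszul, Cohen-Macaulay, and superlevel---and then read off the claimed data from that theorem together with Gorenstein symmetry in socle degree $3$.

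First I would pin down the Hilbert function.  The lower bound $g \geq (n+1)(n+2)/6$ gives $gn \geq \binom{n+2}{3}$, so the Hochster-Laksov maximal-growth formula forces $\dim_\kk I_3 = \binom{n+2}{3}$ and hence $I_3 = S_3$; thus $R_j = 0$ for $j \geq 3$.  The upper bound $g < n(n+2)/4 < \binom{n+1}{2}$ ensures $R_2 \neq 0$, so $R$ is Artinian of socle degree $2$ with $h(R) = (1, n, \binom{n+1}{2} - g)$, and in particular $R$ is quadratic and Cohen-Macaulay.  Non-Koszulness is then immediate from the Fr\"oberg-L\"ofwall theorem, since our range rules out both $g = n$ (as $(n+1)(n+2)/6 > n$ for $n \geq 3$) and $g \geq (n^2+2n)/4$.

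The harder step is superlevel, which splits into a level check plus a linear-presentation check.  For level-ness, I would run an incidence count on $\mathrm{Gr}(g, \Sym^2 V)$: a degree-$1$ socle element is a nonzero $\ell \in V$ with $\ell \cdot V \subseteq I_2$, and since for each such $\ell$ the locus of $I_2$ containing the $n$-dimensional subspace $\ell \cdot V$ has codimension $n(\binom{n+1}{2} - g)$ in $\mathrm{Gr}(g, \Sym^2 V)$, sweeping over $\mathbb{P}(V)$ still gives a proper closed sublocus, so a generic $I_2$ has no degree-$1$ socle.  For linear presentation of $\omega_R$, I would use the duality \eqref{canonical:module:via:Ext} to identify the minimal resolution of $\omega_R$ as the $S$-dual of the resolution of $R$ up to shifts; this reduces superlevel-ness to the single Betti vanishing $\beta_{n-1,n}^S(R) = 0$, which is an upper-semicontinuous condition on $I \in \mathrm{Gr}(g, S_2)$.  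It therefore suffices to produce one specific $I$ in each $(n, g)$ of the stated range satisfying this vanishing.  For $(n, g) = (4, 5)$ the almost complete intersection of \exref{counterexample:to:Conca-Rossi-Valla} works; for the remaining cases I would exhibit an analogous specialisation (e.g.\ a deformation of a suitable monomial compressed algebra with the correct $h$-vector) and verify the vanishing by a direct Koszul-homology computation on the strand $\bigwedge^n V \otimes R_0 \to \bigwedge^{n-1} V \otimes R_1 \to \bigwedge^{n-2} V \otimes R_2$.

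The main obstacle is precisely this Betti vanishing uniformly in $(n, g)$, since no clean liaison description is available beyond $n = 4$; what makes it tractable is the semicontinuity reduction together with the freedom to choose a convenient specialisation.  Once the four properties are in hand, \thmref{non-Koszul:quadratic:superlevel:algebras:yield:non-Koszul:quadratic:Gorenstein:rings} gives that $\tilde{R}$ is a non-Koszul quadratic Gorenstein ring with $\reg \tilde{R} = \reg R + 1 = 3$ and $\codim \tilde{R} = n + \type R = \tfrac{n^2+3n}{2} - g$, and Gorenstein symmetry in socle degree $3$ then forces $h(\tilde{R}) = (1, c, c, 1)$ with $c = \codim \tilde{R}$, yielding \eqref{idealization:of:generic:forms:h-vector}.
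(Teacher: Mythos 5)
Your overall reduction is the same as the paper's: establish that $R$ is Artinian quadratic of socle degree $2$ via Hochster--Laksov, get non-Koszulness from Fr\"oberg--L\"ofwall, translate superlevel into the vanishing of $\beta_{n,n+1}^S(R)$ (levelness) and $\beta_{n-1,n}^S(R)$ (linear presentation of $\omega_R$), and invoke semicontinuity so that one good member of the family suffices. Your incidence count on $\mathrm{Gr}(g,S_2)$ for the absence of degree-one socle is a clean alternative to the paper's treatment of levelness and is correct as stated.

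The gap is in the step you yourself flag as the hard one: you never actually produce, for the infinitely many pairs $(n,g)$ in the range \eqref{superlevel:generic:forms:condition}, a single ideal with $\beta_{n-1,n}^S(R)=0$ (and $\beta_{n,n+1}^S(R)=0$). ``Exhibit an analogous specialisation, e.g.\ a deformation of a suitable monomial compressed algebra, and verify by a direct Koszul-homology computation'' is a plan, not an argument: a case-by-case computation cannot cover an infinite family, and you give no uniform construction or uniform reason why the computation would succeed. The paper's key idea, which is what your proposal is missing, is to pass to the degree reverse lexicographic initial ideal $J$: for generic $I$ one may assume $J_2$ is spanned by the $g$ \emph{largest} quadratic monomials, and whenever $\binom{n+1}{2}-g\geq 2n-1$ (true for all $n\geq 7$, plus $(n,g)=(6,10)$ by inspection) these avoid the $2n-1$ smallest monomials, forcing $J_2\subseteq \kk[x_1,\dots,x_{n-2}]$ and hence $\pd_S S/(J_2)\leq n-2$; the exact sequence $0\to J/(J_2)\to S/(J_2)\to S/J\to 0$ then kills $\beta_{i,i+1}^S(S/J)$ for $i=n-1,n$, and semicontinuity finishes. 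The finitely many leftover pairs $(4,5)$, $(5,7)$, $(5,8)$, $(6,11)$ are then handled by separate arguments (linkage, Hilbert series, and a Zariski-openness argument via Schreyer's algorithm reducing to one Macaulay2 check). Without the initial-ideal argument or an equivalent uniform construction, your proof is incomplete for all but finitely many $(n,g)$.
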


\begin{proof}
Since $I$ is Artinian and has socle degree two, it suffices to show that $\beta_{n-1,n}^S(R) = \beta_{n,n+1}^S(R) = 0$. By upper semicontinuity of the Betti numbers (see \cite[3.13]{upper:semicontinuity}), it further suffices to prove that the corresponding Betti numbers vanish for some initial ideal of $I$. 

Let $J$ denote the initial ideal of $I$ in the degree reverse lexicographic order.  As long as $J$ does not contain the monomials
\begin{equation} \label{avoided:monomials} 
x_1x_{n-1},\ldots, x_{n-2}x_{n-1}, x_{n-1}^2, x_1x_n, \dots, x_1x_{n-1}, x_n^2
\end{equation}
we will have $J_2 \subseteq \kk[x_1,\dots, x_{n-2}]$ so that the projective dimension of $S/(J_2)$ is at most
$n-2$.  In that case, the exact sequence $0 \to J/(J_2) \to S/(J_2) \to S/J \to 0$ then induces exact sequences 
\[ \Tor_i^S(J/(J_2), \kk)_j \to \Tor_i^S(S/(J_2), \kk)_j \to \Tor_i^S(S/J, \kk)_j \to \Tor_{i-1}^S(J/(J_2), \kk)_j \] 
for all $i, j$.   Since $J/(J_2)$ is generated in degree at least 3, we have $\beta_{i,j}^S(J/(J_2)) = 0$ for all $i$ and all $j \leq  i+2$.  In particular, combining this fact with the preceding observations yields $\beta_{i,i+1}^S(S/J) = \beta^S_{i,i+1}(S/(J_2)) = 0$ for $i = n-1, n$ as  wanted.  

Note that the monomials \eqref{avoided:monomials} are the smallest $2n-1$ quadratic monomials in the degree reverse lex order.  Since $I$ is generated by generic forms, we may assume that the determinant of the matrix of coefficients of the $g$ largest monomials for all the generators of $I$ is nonzero, which is a Zariski-open condition on $S_2^g$.  Therefore, after taking suitable $\kk$-linear combinations of generators of $I$, we see that $J$ contains the $g$ largest monomials in the degree reverse lex order, and these monomials must span $J_2$ as $I$ and $J$ have the same Hilbert function.  The $g$ largest quadratic monomials are disjoint from the $2n-1$ smallest so long as $\binom{n+1}{2} - g \geq 2n-1$.  This holds for all $n \geq 7$ by the estimates 
\[ \binom{n+1}{2} - g \geq \binom{n+1}{2} - \floor{\frac{n^2+2n}{4}} = \ceiling{\frac{n^2}{4}} \geq 2n -1 \] 
and by an explicit check when $n = 6$ and $g = 10$.  

In the remaining cases, we cannot use the above argument.  However, the $n = 4$ case follows from Proposition \ref{superlevel:ACI's:in:codimension:4}.  Additionally, when $n = 5$ and $g = 7$, we see that $(1-t)^5h_R(t) = (1-t)^5(1 + 5t +8t^2)$ has no cubic term, which implies that $\beta_{2,3}^S(R) = 0$.  

For the cases $(n, g) = (5, 8), (6, 11)$, we claim that $\beta_{3,4}^S(R) = 0$.  Indeed, we may assume as above that the lead terms of the quadrics generating $I$ are the $g$ largest monomials in degree reverse lex order  $x^{\alpha_1}, \dots, x^{\alpha_g}$.  If $\mathcal{B}$ denotes the set of exponent vectors of the remaining degree two monomials, then we may assume each quadric has the form $x^{\alpha_i} + \sum_{\beta \in \mathcal{B}} c_{i,\beta}x^\beta$ for some $c_{i,\beta} \in \kk$.  

By Schreyer's algorithm \cite{Schreyer's:algorithm:for:free:resolutions}, we can construct a free resolution $F_\bullet$ of $R$ from a Gr\"obner basis including these quadrics.  Now, write $S(-4)^b$ for the number of copies of $S(-4)$ in $F_3$ (which does not depend on the particular coefficients of the quadrics), and consider the portion of the differential $\dd_3: S(-4)^b \to F_2$.  Since we obtain the minimal free resolution of $R$ by pruning $F_\bullet$, we have $\beta_{3,4}^S(R) = 0$ if and only if this submatrix of scalars and linear forms splits, which occurs exactly when the scalar part of $\dd_3$ has rank $b$.  Furthermore, the entries of the scalar part of this submatrix are polynomials in the $c_{i,\beta}$ so that this determines a Zariski-open condition on $S_2^g$ for the vanishing of $\beta_{3,4}^S(R)$.  Therefore, $\beta_{3,4}^S(R) = 0$ for generic sets of quadrics if we can show that there is at least one example with this property, and this is easily checked by a direct computation picking $g$ random quadrics in Macaulay2.
\end{proof}

\begin{example}[{\cite{Roos:non-Koszul:algebras}}] \label{Roos:example}
Not all non-Koszul algebras of socle degree 2 come from this construction.  Roos shows that for $S =  \kk[x, y, z, w, u, v]$ with $\ch(\kk) = 0$ and
\[
I = (x^2,y^2,z^2, u^2,v^2, w^2, xy, yz, uv, vw, xz + 3zw- uw, zw + xu + uw) 
\]
the ring $R = S/I$ is not Koszul.   In this case, the Betti table of $R$ is given by 
\vspace{1 ex}
\begin{equation*}
\begin{tabular}{r|ccccccc}
 & 0 & 1 & 2 & 3 & 4 & 5 & 6 \\
\hline  
0 & 1 & -- & -- & -- & -- & -- & --
\\ 
1 & -- & 12 & 16 & 2 & -- & -- & --
\\ 
2 & -- & -- & 32 & 96 & 100 & 48 & 9
\end{tabular}
\vspace{1 ex}
\end{equation*} 
This ring has $h$-vector $(1,6,9)$, which cannot be realized by generic forms in 6 variables.  Applying the idealization construction to Roos' example, we obtain a non-Koszul quadratic Gorenstein ring $\tilde{R}$ with Betti table
\vspace{1 ex}
\begin{equation*}
\scalebox{0.75}{
\begin{tabular}{r|cccccccccccccccc}
 & 0 & 1 & 2 & 3 & 4 & 5 & 6 & 7 & 8 & 9 & 10 & 11 & 12 & 13 & 14 & 15 \\
\hline  
0 & 1 & -- & -- & -- & -- & -- & -- & -- & -- & -- & -- & -- & -- & -- & -- & --
\\ 
1 & -- & 105 & 896 & 3932 & 11136 &  22154 & 32224 & 34895 & 28224 & 16877 & 7264 & 2134 & 384 & 32 &--&--
\\ 
2 & -- & -- & 32 & 384 &  2134 & 7264 & 16877 & 28224 & 34895 & 32224 & 22154 & 11136 & 3932& 896 & 105 &--
\\ 
3 & -- & -- & -- & -- & -- & -- & -- & -- & -- & -- & -- & -- & -- & -- & -- & 1 
\end{tabular}
}
\vspace{1 ex}
\end{equation*} 
\end{example}

Because Roos' example is superlevel, \eqref{idealization:of:generic:forms:h-vector} does not characterize all $h$-vectors of non-Koszul quadratic Gorenstein rings of regularity three.  This raises the question of which $h$-vectors are possible for such rings.

\begin{thm} \label{regularity:3:negative:answers}
Over a field of characteristic zero, there exist non-Koszul quadratic Gorenstein rings with $h$-vector $(1, c, c, 1)$ for all $c \geq 9$.
\end{thm}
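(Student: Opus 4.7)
The plan is to apply Theorem \ref{non-Koszul:quadratic:superlevel:algebras:yield:non-Koszul:quadratic:Gorenstein:rings}: it suffices to construct, for each $c \geq 9$, a non-Koszul quadratic superlevel Artinian $\kk$-algebra $R$ of socle degree two with $\codim R + \type R = c$. Then $\tilde{R} = R \ltimes \omega$ is a non-Koszul quadratic Gorenstein ring, and by the codimension and regularity formulas of that theorem its $h$-vector is $(1, c, c, 1)$.

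The main engine is Theorem \ref{generic:quadrics:with:socle:degree:2}. A generic choice of $g$ quadrics in $n$ variables yields a ring with $\codim R + \type R = \frac{n^2+3n}{2} - g$, and the admissibility window \eqref{superlevel:generic:forms:condition} translates, for fixed $n$, to the interval
\[ \frac{n^2+4n}{4} < c \leq \frac{n^2+3n-1}{3}. \]
A direct numerical check of this inequality shows that $n = 4, 5, 6, 7$ realize precisely the $c$-values $9$, $\{12, 13\}$, $\{16, 17\}$, and $\{20, 21, 22, 23\}$. Moreover, a short comparison establishes that once $n \geq 8$, the upper end $\lfloor (n^2+3n-1)/3 \rfloor$ of the $n$-interval is at least one less than the lower end of the $(n+1)$-interval (one compares $n^2 - 6n - 7$ to $0$), so the union of integer $c$-values for $n \geq 8$ exhausts every $c \geq 25$.

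This reduces the theorem to the seven exceptional values $c \in \{10, 11, 14, 15, 18, 19, 24\}$. Roos' algebra from Example \ref{Roos:example}, which has $h$-vector $(1, 6, 9)$, handles $c = 15$. For each of the remaining six $c$, I would exhibit an explicit (non-generic) quadratic Artinian algebra $R$ with $h$-vector $(1, n, c - n)$ for a suitable $n$ (selected from the $(n, c-n)$ combinations that admit socle degree two as a function of the Hochster-Laksov bound on $\dim_\kk I_3$), then verify with a computer algebra system such as Macaulay2 \cite{Macaulay2} both superlevelness (a linear first syzygy matrix for $\omega_R$, by Lemma \ref{quadratic:idealization:iff:superlevel}) and failure of Koszulness (a nondiagonal monomial in $\P_R(s, t)$).

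The principal obstacle is the second step: generic quadrics do not reach the small missing values, and the conditions of superlevelness and non-Koszulness pull in opposite directions (non-genericity is required to be non-Koszul in the needed range, yet non-generic choices easily destroy the linear presentation of $\omega_R$). Consequently, no single uniform construction covers all seven exceptional cases, and each must be treated by a separate computer-assisted example. Once such examples are produced, the result is immediate from Theorem \ref{non-Koszul:quadratic:superlevel:algebras:yield:non-Koszul:quadratic:Gorenstein:rings}.
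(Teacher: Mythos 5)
You follow exactly the same strategy as the paper: use Theorem \ref{generic:quadrics:with:socle:degree:2} to realize a range of $c$-values via idealizations of generic-quadric algebras, identify the interval of realized $c$ for each $n$, observe that the intervals become contiguous past a threshold, and mop up the few small exceptional codimensions with ad hoc examples. Your translation of the constraint \eqref{superlevel:generic:forms:condition} to $\tfrac{n^2+4n}{4} < c \leq \tfrac{n^2+3n-1}{3}$ is correct, as is your enumeration of the reached $c$-values for $n \leq 8$ and the exceptional set $\{10,11,14,15,18,19,24\}$.

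Two points of caution. First, the contiguity argument ``compare $n^2 - 6n - 7$ to $0$'' is not quite airtight: $(n-7)(n+1) \geq 0$ already at $n = 7$, yet the integer intervals from $n = 7$ and $n = 8$ genuinely have a gap at $c = 24$ because of the floor/ceiling in passing from the real interval to its integer part (the endpoint $\tfrac{(n+1)^2+4(n+1)}{4}$ is exactly $24$ when $n+1 = 8$). So the real-inequality threshold $n \geq 7$ does not by itself give integer contiguity from $n = 8$; one needs to argue with $g_{\min}$, $g_{\max}$ including their ceilings, or simply verify the $n = 8 \to 9$ handoff directly. The paper does exactly that --- an estimate for $n \geq 9$ followed by an explicit check at $n = 8$. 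Second, and more substantively, the six exceptional cases $c \in \{10,11,14,18,19,24\}$ are precisely where the remaining mathematical content lies, and your proposal stops at ``I would exhibit an explicit algebra and verify it in Macaulay2.'' You correctly flag the tension (non-genericity needed for non-Koszulness versus non-genericity endangering the linear presentation of $\omega_R$), but without producing the algebras the proof is incomplete. The paper resolves this in Figure \ref{exceptional:cases} by explicit ideals obtained from modifications of Roos' example, each checked in Macaulay2 to be superlevel and non-Koszul; some such construction is essential to close the argument.
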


\begin{proof}
For each $n$, the value $c(n,g) =\frac{n^2+3n}{2}-g$ appearing in \eqref{idealization:of:generic:forms:h-vector} is decreasing in $g$ and takes every integer value in the range $c(n, g_{\max}(n)) \leq c \leq c(n, g_{\min}(n))$, where
\[ g_{\max}(n) = \ceiling{\frac{n^2+2n}{4}} - 1 \qquad g_{\min}(n) = \ceiling{\frac{n^2+3n+2}{6}} \]
Hence, there will be no gaps in the codimensions attained by $c(n, g)$ so long as \[c(n, g_{\min}(n)) \geq c(n+1, g_{\max}(n+1)) - 1. \] We claim that this holds for all $n \geq 8$.  This follows from the fact that
\begin{align*} 
c(n, g_{\min}(n)) - c(n+1, g_{\max}(n+1)) &>
c(n, \tfrac{n^2+3n+2}{6}+1) - c(n+1, \tfrac{(n+1)^2+2(n+1)}{4} - 1) \\
&= \frac{n^2-6n-43}{12} \geq -2
 \end{align*}
 when $n \geq 9$ and by an explicit check when $n = 8$.  Thus, the construction of Theorem \ref{generic:quadrics:with:socle:degree:2} yields non-Koszul quadratic Gorenstein rings with $h$-vector $(1, c, c, 1)$ for all $c \geq 25$, and for all $c \geq 9$ with the
exceptions of $c \in \{10,11,14,15,18,19,24\}$. Example \ref{Roos:example} takes care of the $c = 15$ case, and slight modifications yield superlevel non-Koszul algebras of socle degree two in the remaining cases. These cases arise from ideals of the form $I = J + L$, where $J$ is given by Figure \ref{exceptional:cases} below and $L$ is generated by the squares of the variables appearing in the generators of $J$.  The quotients $R = S/I$ corresponding to these examples are easily checked in Macaulay2 to be superlevel with idealization having $h$-vector $(1, c, c, 1)$ for the appropriate $c$ in the list above.
\end{proof}

\begin{figure}[h]
\scalebox{0.95}{
\begin{tabular}{|c|c|}
\hline 
$c$ & $J$ \\ 
\hline 
$10$ & 
\begin{minipage}{0.75\linewidth}
\vspace{1 ex}
\begin{center} 
\hspace{-3 em}
$(x_1x_5, x_1x_2, x_4x_5, x _3x_5+x_1x_4+x_4x_5, x_2x_4+x_3x_5)$
\end{center}
\vspace{1 ex}
\end{minipage} 
\\
\hline 
$11$ &
\begin{minipage}{0.9\textwidth}
\vspace{1 ex}
\begin{center}
\hspace{-3 em}
 $(x_1x_5, x_1x_2, x _3x_5+x_1x_4+x_4x_5, x_2x_4+x_3x_5)$
\end{center}
\vspace{1 ex}
\end{minipage}
\\
\hline 
$14$ & 
\begin{minipage}{0.9\textwidth}
\vspace{1 ex}
\begin{center}
\hspace{-3 em}
$\begin{array}{cc}
(x_1x_2, x_2x_3, x_4x_5, x_5x_6, x_1x_3+3x_3x_6-x_4x_6,  
\\ 
\hspace{9 em}  x_3x_6+x_1x_4+x_4x_6, x_2x_4+x_3x_5)
\end{array}$
\end{center}
\vspace{1 ex} 
\end{minipage} 
\\
\hline 
$18$ & 
\begin{minipage}{0.9\textwidth}
\vspace{1 ex}
\begin{center}
\hspace{-5 em}
$\begin{array}{cc}
(x_1x_2+x_2x_3, x_4x_5, x_5x_6, x_6x_7, x_1x_3+3x_3x_6-x_4x_6,  
\\ 
\hspace{5 em} x_3x_6+x_1x_4+x_4x_6, x_2x_4+x_3x_5,  x_2x_5+x_2x_7,x_1x_7,x_3x_7)
\end{array}$
\end{center}
\vspace{1 ex} 
\end{minipage} 
\\
\hline 
$19$ & 
\begin{minipage}{0.8\textwidth}
\vspace{1 ex}
\begin{center}
\hspace{-3 em}
$\begin{array}{cc}
(x_4x_5, x_5x_6, x_6x_7, x_2x_4+x_3x_5, x_2x_5+x_2x_7, x_1x_7, x_3x_7, 
\\ 
x_7x_8, x_1x_8, x_3x_8, x_4x_8, x_6x_8, x_3x_5, x_2x_7,  
\\
x_1x_2+x_2x_3, x_1x_3+3x_3x_6-x_4x_6,x_3x_6+x_1x_4+x_4x_6) 
\end{array}$
\end{center}
\vspace{1 ex} 
\end{minipage} 
\\
\hline
$24$ & 
\begin{minipage}{0.8\textwidth}
\vspace{1 ex}
\begin{center}
\hspace{-3 em}
$\begin{array}{cc}
(x_1x_2+x_2x_3, x_4x_5, x_5x_6, x_6x_7, x_4x_9, x_5x_7, x_7x_9, x_1x_7, 
\\ 
x_3x_7, x_7x_8, x_1x_8, x_2x_8, x_3x_8, x_5x_8, x_6x_8, 
\\
x_1x_3+3x_3x_6-x_4x_6, x_3x_6+x_1x_4+x_4x_6, x_2x_4+x_3x_5, 
\\
x_2x_5+x_2x_7, x_2x_9+x_1x_9, x_3x_9+x_6x_9)
\end{array}$
\end{center}
\vspace{1 ex} 
\end{minipage} 
\\
\hline
\end{tabular}
}
\label{exceptional:cases}
\caption{Exceptional examples yielding non-Koszul quadratic Gorenstein rings of socle degree 3}
\end{figure}

\begin{rmk}
We have assumed that we are working over a field of characteristic zero in this section in order to simplify the statements of our results.  However, the proof of Theorem \ref{generic:quadrics:with:socle:degree:2} shows that we obtain non-Koszul quadratic Gorenstein rings of regularity 3 and almost every codimension greater than or equal to 9 in all characteristics.  We only need to specify a particular characteristic for the exceptional cases that require a direct computation in Macaulay2.
\end{rmk}

\subsection{More examples via tensor products}

When $R$ is a non-Koszul quadratic Gorenstein ring, the idealization $\tilde{R}$ will again be non-Koszul, quadratic, and Gorenstein with codimension and regularity increased by one.  In this case, \eqref{idealization:presentation} shows that the idealization is just the tensor product $R \tensor_\kk \kk[y]/(y^2)$.  In particular, the results of the previous section show that Question \ref{when:are:quadratic:Gorenstein:rings:Koszul} has a negative answer for all $r \geq 3$ and $c \geq r + 6$.  We can produce more examples by tensoring with other Gorenstein Koszul algebras. 
\begin{prop}
Let $R = S/I$ be a quadratic ring and $B$ be a superlevel Koszul algebra.  Then $R' = R \tensor_\kk B$ is Koszul (resp. level, superlevel) if and only if $R$ is.  Moreover, we have
\begin{align*} 
\codim R' &= \codim R + \codim B \\
\type R' &= (\type R)(\type B) \\
\reg R' &= \reg R + \reg B
\end{align*}
\end{prop}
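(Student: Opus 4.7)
The plan is to exploit the tensor product structure on $R'$ via the Künneth formula over the ground field $\kk$. Writing $R = S/I$ with $S = \kk[x_1, \dots, x_n]$ and $B = T/J$ with $T = \kk[y_1, \dots, y_m]$, we have $R' = (S \otimes_\kk T)/(I+J)$. Since $I$ and $J$ involve disjoint sets of variables and $R, B$ are both Cohen-Macaulay, $\hgt(I + J) = \hgt I + \hgt J$, which yields $\codim R' = \codim R + \codim B$ immediately.

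The central tool will be the following: if $F_\bullet \to M$ and $G_\bullet \to N$ are minimal graded free resolutions over $R$ and $B$ respectively, then $F_\bullet \otimes_\kk G_\bullet$ is a minimal graded free resolution of $M \otimes_\kk N$ over $R'$. Indeed, it is a resolution by Künneth over the field $\kk$, its terms are free $R'$-modules, and its differential has entries in $\mathfrak{m}_R \otimes B + R \otimes \mathfrak{m}_B = \mathfrak{m}_{R'}$. Taking graded dimensions yields
\[ \Tor_i^{R'}(M \otimes_\kk N, \kk)_j \iso \bigoplus_{\substack{p + q = i \\ j_1 + j_2 = j}} \Tor_p^R(M, \kk)_{j_1} \otimes_\kk \Tor_q^B(N, \kk)_{j_2}. \]
Specialized to $M = N = \kk$, this gives $\P_{R'}(s,t) = \P_R(s,t) \cdot \P_B(s,t)$; since $B$ is Koszul, $\P_B \in \ZZ[[st]]$, so $\P_{R'} \in \ZZ[[st]]$ iff $\P_R \in \ZZ[[st]]$, proving the Koszul equivalence.

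For the level, superlevel, and type statements, the additional input needed is the identification $\omega_{R'} \iso \omega_R \otimes_\kk \omega_B$ as graded $R'$-modules. This follows from applying Künneth to compute $\Ext^{c + c'}_{S \otimes T}(R', S \otimes T)$ via the resolution $F_\bullet \otimes_\kk G_\bullet$ of $R'$, combined with the shift convention $\omega_R = \Ext^c_S(R, S)(-n)$: the shifts $(-n-m)$ cancel against the $(+n+m)$ introduced when passing from $\Ext^c_S(R,S) \otimes_\kk \Ext^{c'}_T(B,T)$ back to $\omega_R(n) \otimes_\kk \omega_B(m)$. Given this, minimal generators of $\omega_{R'}$ correspond bijectively to pairs of minimal generators of $\omega_R$ and $\omega_B$ with additive degrees, so $\omega_{R'}$ is generated in a single degree iff $\omega_R$ is (using that $B$ is level), and $\type R' = (\type R)(\type B)$. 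Taking $M = \omega_R$, $N = \omega_B$, and $i = 1$ in the Künneth formula above then shows that $\Tor_1^{R'}(\omega_{R'}, \kk)$ is concentrated in a single degree iff the same holds for both $\Tor_1^R(\omega_R, \kk)$ and $\Tor_1^B(\omega_B, \kk)$; since $B$ is superlevel, this reduces to superlevelness of $R$.

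Finally, the regularity formula is a Hilbert series computation: $H_{R'}(t) = H_R(t) H_B(t)$, so $h_{R'}(t) = h_R(t) h_B(t)$ and hence $\reg R' = \deg h_{R'} = \deg h_R + \deg h_B = \reg R + \reg B$, using that $R, B, R'$ are all Cohen-Macaulay. The main anticipated obstacle is not conceptual but rather the careful bookkeeping of graded shifts needed to identify $\omega_{R'}$ with $\omega_R \otimes_\kk \omega_B$ without any twist, together with the verification that tensoring minimal free resolutions over $\kk$ produces a minimal free resolution over $R \otimes_\kk B$ — a step where working over the field is essential.
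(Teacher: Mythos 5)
Your proof is correct and rests on the same underlying engine as the paper's: tensoring minimal free resolutions over $\kk$ produces minimal free resolutions over $R' = R\otimes_\kk B$, and all the invariants multiply or add accordingly. The main place where your route diverges is the Koszul claim. The paper tensors the minimal $B$-free resolution of $\kk$ with $R$ to see that $\reg_{R'}(R)=0$, then invokes a transfer lemma from Conca--De~Negri--Rossi (Koszulness passes across modules of regularity zero). You instead establish the Poincar\'e series factorization $\P_{R'}(s,t)=\P_R(s,t)\,\P_B(s,t)$ and deduce the equivalence directly; to close the ``only if'' direction you should note explicitly that since $\P_B$ has constant term $1$ and all coefficients in the product are non-negative, any off-diagonal monomial of $\P_R$ survives into $\P_{R'}$, so $R'$ non-Koszul forces $R$ non-Koszul. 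This buys you a self-contained argument where the paper leans on a citation. For level/superlevel/type, the paper reads these off the $S\otimes_\kk A$-resolution of $R'$, whereas you work over $R'$ itself via the identification $\omega_{R'}\iso\omega_R\otimes_\kk\omega_B$ and the K\"unneth description of $\Tor_1^{R'}(\omega_{R'},\kk)$; the latter is arguably the more natural home for the superlevel condition, since that condition is intrinsically about the presentation over $R'$ rather than over the polynomial ring. One small imprecision: in the superlevel step you should ask that $\Tor_1^{R'}(\omega_{R'},\kk)$ be concentrated in the specific degree one above the generating degree of $\omega_{R'}$, not merely in a single degree; given that levelness has already been established, the two are equivalent, but it is worth stating. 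The codimension and regularity computations match the paper's $h$-polynomial argument.
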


\begin{proof}
Since tensoring over $\kk$ is exact, tensoring the minimal free resolution of $\kk$ over $B$ with $R$ yields the minimal free resolution of $R$ over $R'$.  As $B$ is Koszul, we see that $\reg_{R '}(R) = 0$ so that $R'$ is Koszul if and only if $R$ is by \cite[\S 3.1, 2]{Koszul:algebras:and:regularity}.  Write $B = A/J$ for some standard graded polynomial ring $A$.  The other parts easily follow from the fact that the minimal free resolution of $R'$ over $S \tensor_\kk A$ is the tensor product of the minimal free resolutions of $R$ over $S$ and of $B$ over $A$.  In particular, when $R$ is level, the equalities concerning the codimension, type, and regularity of $R'$ also follow from the fact that the $h$-polynomial of $R'$ is the product of the $h$-polynomials of $R$ and $B$.
\end{proof}

\begin{cor}
Over a field of characteristic zero, there exists a non-Koszul quadratic Gorenstein ring of codimension $c$ and regularity $r$ for every $r \geq 6$ and $c \geq r + 3$.
\end{cor}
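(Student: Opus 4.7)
My plan is to combine Matsuda's non-Koszul quadratic Gorenstein toric ring $M$ of regularity $4$ and codimension $7$ \cite{Matsuda} (mentioned in the introduction) with the preceding tensor product proposition. For any Koszul Gorenstein algebra $B$ (which is automatically superlevel, since $\omega_B \cong B(a)$ is free of rank one), applying the proposition with $R = M$ shows that $M \otimes_\kk B$ is a non-Koszul quadratic level ring, and since $\type(M \otimes_\kk B) = \type M \cdot \type B = 1$, it is in fact Gorenstein. Moreover, $\reg(M \otimes_\kk B) = 4 + \reg B$ and $\codim(M \otimes_\kk B) = 7 + \codim B$.

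Given target invariants $r \geq 6$ and $c \geq r + 3$, set $r_B = r - 4 \geq 2$ and $c_B = c - 7 \geq r_B$. It suffices to construct a Koszul Gorenstein algebra $B$ with $\reg B = r_B$ and $\codim B = c_B$. Take $B = B_0 \otimes_\kk C$, where $B_0$ is a standard graded quadratic Gorenstein algebra with $h$-vector $(1, c_B - r_B + 2, 1)$ and $C = \kk[z_1, \dots, z_{r_B - 2}]/(z_1^2, \dots, z_{r_B - 2}^2)$ is a quadratic complete intersection (interpreted as $\kk$ when $r_B = 2$). Then $C$ is clearly Koszul and Gorenstein, and $B_0$ is Koszul by \cite[2.12]{Gr�bner:flags:and:Gorenstein:algebras} since $\reg B_0 = 2$. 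Tensor products over $\kk$ preserve Koszulness, Gorensteinness, and quadraticness, so $B$ is a Koszul Gorenstein algebra with $\reg B = 2 + (r_B - 2) = r_B$ and $\codim B = (c_B - r_B + 2) + (r_B - 2) = c_B$. Hence $M \otimes_\kk B$ is the desired non-Koszul quadratic Gorenstein ring.

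The main obstacle is exhibiting the building block $B_0$: a standard graded quadratic Gorenstein algebra with $h$-vector $(1, m, 1)$ for each integer $m \geq 2$. Over a field of characteristic zero, one can take $B_0 = \kk[x_1, \dots, x_m]/J$ where $J$ is generated by the quadrics $x_i x_j$ for $1 \leq i < j \leq m$ together with $x_i^2 - x_1^2$ for $2 \leq i \leq m$; this arises from a diagonalized non-degenerate symmetric bilinear form on the degree one component and yields a Gorenstein algebra with one-dimensional socle in degree two. One should also verify routinely that the tensor product formulas for codimension, regularity, and Gorensteinness behave as claimed, and that the proposition applies even in the degenerate case $C = \kk$.
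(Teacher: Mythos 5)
Your proof is correct, and at its core it is the same strategy as the paper's: tensor Matsuda's $(c,r)=(7,4)$ example with Gorenstein Koszul algebras via the preceding proposition (noting that Gorenstein implies superlevel, and that type is multiplicative). Where you differ is in how the Koszul factors are chosen to sweep out all pairs with $c \geq r+3$, $r \geq 6$. The paper handles $c - r = 3,4,5$ by tensoring Matsuda's ring with, respectively, complete intersections, a codimension-$3$ quadratic Gorenstein ring (Koszul via Buchsbaum--Eisenbud), and $\kk[X]/I_2(X)$ for a $3\times 3$ matrix $X$ (Koszul via Gulliksen--Neg\r{a}rd), and then covers $c - r \geq 6$ by propagating the regularity-$3$ counterexamples of Theorem \ref{regularity:3:negative:answers} with complete intersections. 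You instead use a single uniform family of Koszul Gorenstein factors $B_0 \otimes_\kk C$, where $B_0$ is a quadratic Gorenstein algebra with $h$-vector $(1,m,1)$ for arbitrary $m \geq 2$ (Koszul by \cite[2.12]{Gr�bner:flags:and:Gorenstein:algebras}, exactly as invoked in the introduction) and $C$ is a quadratic complete intersection; this reaches every $c - r \geq 3$ directly. Your route is arguably cleaner and more self-contained, since it avoids both the determinantal example and any appeal to Theorem \ref{regularity:3:negative:answers}; the paper's route has the minor advantage of reusing structures already in hand and of not requiring an explicit construction of the $(1,m,1)$ algebras (your construction of $B_0$ from a nondegenerate quadratic form is standard and correct in characteristic zero). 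The only points to state carefully, which you flag, are that the tensor product of Cohen--Macaulay (resp.\ quadratic) graded $\kk$-algebras is again Cohen--Macaulay (resp.\ quadratic), so that ``level of type one'' really does yield Gorenstein, and that the degenerate case $C = \kk$ causes no trouble.
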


\begin{proof}
If $R$ is a non-Koszul quadratic Gorenstein ring and $B$ is a Gorenstein Koszul algebra, then $R' = R \tensor_\kk B$ is again a non-Koszul quadratic Gorenstein ring.  We can therefore produce more examples of such rings by tensoring Matsuda's example $R$, which has $\codim R = 7$ and $\reg R = 4$, with appropriate Gorenstein Koszul algebras and combining this with our results.  Specifically, if we take any quadratic Gorenstein ring $B$ with $\codim B = 3$, then $\reg B = 2$ by the Buchsbaum-Eisenbud structure theorem for such rings so that $B$ is Koszul, and tensoring with Matsuda's example gives a negative answer to Question \ref{when:are:quadratic:Gorenstein:rings:Koszul} for $(c, r) = (10, 6)$.  If we take $B = \kk[X]/I_2(X)$ where $X$ is a $3 \times 3$ matrix of variables, then the Gulliksen-Neg\r{a}rd resolution \cite[2.5, 2.26]{Bruns:Vetter} shows that $\codim B = 4$ and $\reg B = 2$ so that we also obtain a negative answer for $(c, r) = (11, 6)$.  Propagating these negative answers and the negative answers of Theorem \ref{regularity:3:negative:answers} by tensoring with complete intersections completes the proof.
\end{proof}

We summarize the preceding discussion in Figure \ref{table:à:la:Hartshorne} below.  Aside from the seven remaining cases of codimension $c \geq 6$, one might still hope that every quadratic Gorenstein ring $R$ with $\codim R \leq \reg R + 2$ is Koszul, which could explain the affirmative answers in regularity three.

\begin{figure}[h]
\begin{minipage}{0.75\textwidth}
\begin{tikzpicture}[scale = 0.65]
\draw[gray, very thin] (0, 0.1) grid (12.9, 13);
\draw[->] (-1,13) -- (13,13);
\draw[->] (0, 14) -- (0, 0); 
\node[right] (c) at (13, 13) {$c$};
\node[below] (r) at (0, 0) {$r$};

\foreach \c in {0,1,2,3,4,5,6,7,8,9,10,11} {
	\node[above] (\c) at (\c+1, 13) {\tiny \c};
	\node[left] (\c +12) at (0, 12-\c) {\tiny \c};
	};
	
\foreach \c/\r in {0/0,1/1,2/2,3/3,4/4,5/5,6/6,7/7,8/8,9/9,10/10,11/11,3/2,4/2,5/2,6/2,7/2,8/2,9/2,10/2,11/2,4/3,5/3} {
	\draw[thick, fill = black!75] (\c + 1, 12-\r) circle (0.25);
	};
	
\foreach \c/\r in {6/3,7/3,8/3,6/4,8/4,9/4,7/5, 8/6,9/7,10/8,11/9, 9/5,10/5,5/4,6/5,7/6,8/7,9/8,10/9,11/10} {
	\fill[black!75] (\c+1, 11.75 - \r) -- (\c+1, 12.25 - \r) arc (90:270:0.25) -- cycle;
	\draw[thick] (\c + 1, 12 - \r) circle (0.25);
	};
	
\foreach \c/\r in {9/3,10/3,11/3,10/4,11/4,11/5,7/4,8/5,9/6,10/7,11/8,10/6,11/7,11/6} {
	\draw[thick] (\c + 1, 12-\r) circle (0.25);
	};
	
\draw[thick, fill = black!75] (15, 8) circle (0.25);
\node[right] (y) at (15.5, 8) {\small Yes};
\draw[thick] (15, 7) circle (0.25);
\node[right] (n) at (15.5, 7) {\small No};
\fill[black!75] (15, 5.75) -- (15, 6.25) arc (90:270:0.25) -- cycle;
\draw[thick] (15, 6) circle (0.25);
\node[right] (m) at (15.5, 6) {\small Unknown};
\end{tikzpicture}
\end{minipage}
\caption{Is every quadratic Gorenstein ring of codimension $c$ and regularity $r$ Koszul?}
\label{table:à:la:Hartshorne}
\end{figure}

\section{Future Directions}

Matsuda's example cannot be obtained with our methods; there are no superlevel quadratic algebras with the right Hilbert function. It would be interesting to find geometric interpretations of our results. The initial example that inspired the results of this paper was a certain inverse system related to the Artinian reduction of a smooth curve of genus seven and degree eleven in $\PP^5$ defined by five quadrics which is projectively normal but not Koszul \cite{high:rank:linear:syzygies}.  We plan to investigate this further, as well as studying how idealization relates to the parameter space of Gorenstein algebras and work by Iarrobino-Kanev \cite{Iarrobino:Kanev} and Boij \cite{Boij}.  In a followup paper \cite{QGRKP2}, we provide further affirmative(!)\!\! and negative answers to Question \ref{when:are:quadratic:Gorenstein:rings:Koszul} by alternative methods.  More recently, McCullough-Secelanu show in \cite{subadditivity:fails:for:Gorenstein:rings} that idealizing an example of Roos \cite{c=8:r=3}
gives a negative answer to the $(c, r) = (8, 3)$ case. 

Additionally, for Artinian algebras such as those constructed in \S \ref{ideals:of:generic:forms}, much attention has been devoted recently to determining which Gorenstein rings $R$ have the weak Lefschetz property (or WLP); this property asserts the existence of a linear form $\ell \in R_1$ such that the $\kk$-linear multiplication map $R_i \stackrel{\ell}{\to} R_{i+1}$ has maximal rank for all $i$.  The ranks of multiplication maps $R_1 \stackrel{\ell}{\to} R_2$ play an important role in \cite{Gröbner:flags:and:Gorenstein:algebras} and \cite{Caviglia}. We intend to further investigate how WLP may interact with the Koszul property for quadratic Artinian Gorenstein algebras.

\begin{ack}
Macaulay2 computations were essential to our work. We also thank BIRS-CMO, where we learned of Matsuda's result.
\end{ack}

\end{spacing}

\end{document}